\numberwithin{equation}{section}  
\theoremstyle{thmstyleone}
\newtheorem{theorem}{Theorem}[section] 
\newtheorem{remark}{Remark}[section]
\newtheorem{lemma}{Lemma}[section]
\begin{document}
\title[Article Title]{Local Projection Stabilization Methods for  $\bm{H}(\mathrm{curl})$ and  $\bm H(\mathrm{div})$ Advection Problems}

\author[1]{\fnm{Yangfan} \sur{Luo}}\email{luoyf@stu.pku.edu.cn}

\author[2]{\fnm{Jindong} \sur{Wang}}\email{jzw6472@psu.edu}

\author*[1]{\fnm{Shuonan}\sur{Wu}}\email{snwu@math.pku.edu.cn}
\affil[1]{\orgdiv{School of Mathematical Sciences}, \orgname{Peking University}, \orgaddress{ \city{Beijing}, \postcode{100871},  \country{China}}}

\affil[2]{\orgdiv{Department of Mathematics}, \orgname{The Pennsylvania State University}, \orgaddress{\city{Univerisity Park}, \postcode{16802}, \state{PA}, \country{USA}}}

\affil*[1]{\orgdiv{School of Mathematical Sciences}, \orgname{Peking University}, \orgaddress{ \city{Beijing}, \postcode{100871},  \country{China}}}

\abstract{We devise local projection stabilization (LPS) methods for advection problems in the $\bm{H}$(curl) 
and $\bm{H}$(div) spaces, employing conforming finite element spaces of arbitrary order within a unified framework. The key ingredient is a local inf-sup condition, enabled by enriching the approximation space with appropriate $\bm{H}$(d) bubble functions (with d = curl or div). This enrichment allows for the construction of modified interpolation operators, which are crucial for establishing optimal a priori error estimates in the energy norm. Numerical examples are presented to verify both the theoretical results and the stabilization properties of the proposed method.
}

\keywords{$\bm{H}$(curl) bubble, $\bm{H}$(div) bubble, advection, stabilization, local projection, local inf-sup condition, enrichment}

\maketitle

\section{Introduction}\label{sec1}
Consider the following advection-diffusion equation for the magnetic vector potential
\begin{equation}
    \label{eqn curl time-dep}
 \nabla \times (\varepsilon \, \nabla \times \bm{u}) + (\nabla \times \bm{u}) \times \bm{\beta} + \nabla (\bm{\beta} \cdot \bm{u}) + \gamma \bm{u} = \bm{f},
\end{equation}
which describes the magnetic field in a conducting fluid moving with velocity $\bm{\beta}$, where $\varepsilon$ denotes the magnetic diffusion coefficient. This magnetic advection-diffusion problem plays a central role in many scientific and engineering applications, particularly in magnetohydrodynamics (MHD) \cite{gerbeau2006mathematical}.

Alternatively, one can model the system using the magnetic induction field (again denoted by $\bm u$), leading to the following formulation:
\begin{equation}
\label{eqn div time-dep}
 - \nabla (\varepsilon \, \nabla \cdot \bm{u})+ \bm{\beta} (\nabla \cdot \bm{u}) + \nabla \times (\bm{u} \times \bm{\beta})  + \gamma \bm{u} = \bm{f}.
\end{equation}
Equations \eqref{eqn curl time-dep} and \eqref{eqn div time-dep} both represent generalized advection-diffusion problems and can be interpreted as advection-diffusion equations corresponding to different differential forms \cite{heumann2010eulerian,heumann2016stabilized}.

One of the main numerical challenges for \eqref{eqn curl time-dep} and \eqref{eqn div time-dep} is the stability with respect to $\varepsilon$ for the advection-dominated case, i.e., when $\varepsilon \ll |\bm{\beta}|$. In this case, standard numerical methods may suffer from spurious oscillations or loss of accuracy. This difficulty is particularly pronounced and has been extensively studied in the context of the closely related scalar advection-diffusion problem
\begin{equation}\label{eq:scalar_advection_diffusion}
-\nabla \cdot (\varepsilon\nabla u) + \bm{\beta} \cdot \nabla u + \gamma u = f. 
\end{equation}

To address this challenge in the scalar case, a wide range of numerical methods has been developed. One broad class of approaches for the scalar advection-diffusion equation \eqref{eq:scalar_advection_diffusion} focuses on enhancing stability through upwinding or streamline-based stabilization techniques. This class includes methods such as the stabilized discontinuous Galerkin (DG) method \cite{houston2002discontinuous, brezzi2004discontinuous, ayuso2009discontinuous}, the streamline upwind/Petrov-Galerkin (SUPG) method \cite{mizukami1985petrov, hughes1979finite, franca1992stabilized, chen2005optimal, burman2010consistent}, the Galerkin/least-squares finite element method \cite{hughes1989new}, bubble function stabilization \cite{brezzi1994choosing, brezzi1998applications, brezzi1998further, brezzi1999priori, franca2002stability}, the continuous interior penalty (CIP) method \cite{burman2005unified, burman2007continuous, burman2009weighted}, and the local projection stabilization (LPS) method \cite{braack2006local, matthies2007unified, knobloch2010generalization}.

Recently, the vector advection-diffusion problems \eqref{eqn curl time-dep} and \eqref{eqn div time-dep} have attracted growing attention from the numerical analysis community. Exponentially-fitted methods, which provide intrinsic stabilization, have been proposed in \cite{wu2020simplex, wang2023exponentially}. For upwinding-based stabilization, Heumann and Hiptmair applied a specialized class of upwind methods for vector-valued problems \cite{heumann2010eulerian, heumann2013stabilized}. In addition, primal discontinuous Galerkin and hybridizable discontinuous Galerkin methods for the magnetic advection-diffusion problem \eqref{eqn curl time-dep} have been developed in \cite{wang2024discontinuous, wang2024hybridizable}. Most of the existing stabilization approaches rely on non-conforming finite element spaces. However, to the best of our knowledge, there is currently no work in the literature that employs conforming finite element spaces combined with local projection stabilization involving symmetric stabilizing terms.

The local projection stabilization (LPS) technique was originally developed to enable the stable discretization of the Stokes problem using equal-order finite elements \cite{becker2001finite}. It has since been successfully extended to various applications, including transport equations \cite{becker2004two}, Oseen equations \cite{braack2006local}, and advection-diffusion-reaction problems \cite{ganesan2010stabilization, knobloch2009local, knobloch2010generalization}. 
A key component of the LPS method is the satisfaction of a local inf-sup condition within the underlying finite element spaces \cite{matthies2007unified,knobloch2010generalization}, which is crucial for establishing the analysis leading to optimal error convergence.
Compared to residual-based stabilization methods, such as SUPG, LPS provides similar stability advantages without requiring the computation of second-order derivatives, thus making it particularly suitable for time-dependent problems. Furthermore, a distinguishing benefit of the LPS approach is the symmetry of its stabilization term, which is especially advantageous in optimization contexts \cite{braack2009optimal}.

There are two primary variants of LPS: 
the macroelement-based method (covering both the original two-level method \cite{becker2001finite} and its generalization with local projection spaces on overlapping sets \cite{knobloch2010generalization,garg2023local}), and the bubble-enrichment method \cite{matthies2007unified,ganesan2010stabilization}.
We focus on the bubble-enrichment method, which features a more compact stencil than the macroelement-based method and is more efficient in terms of implementation. 

Since our analysis is concerned with the regime $\varepsilon \ll |\bm{\beta}|$, the diffusion term becomes negligible in the asymptotic limit. 
Consequently, we omit the diffusion term and restrict our investigation to the following vector advection problems:

\begin{equation}
    \label{eq:3d h curl}
    \left\{
    \begin{aligned}
        (\nabla \times \bm{u}) \times \bm{\beta}
        + \nabla (\bm{\beta} \cdot \bm{u}) + \gamma \bm{u} &= \bm{f} \quad \text{in } \Omega, \\
        \bm{u} &= \bm{g} \quad \text{on } \Gamma_{-},
    \end{aligned}
    \right.
\end{equation}

\begin{equation}
    \label{eq:3d h div}
    \left\{
    \begin{aligned}
        \bm{\beta} (\nabla \cdot \bm{u}) + \nabla \times (\bm{u} \times \bm{\beta}) + \gamma \bm{u} &= \bm{f} \quad \text{in } \Omega, \\
        \bm{u} &= \bm{g} \quad \text{on } \Gamma_{-}.
    \end{aligned}
    \right.
\end{equation}
Here, $\Omega \subset \mathbb{R}^3$ is a bounded, Lipschitz, polyhedral domain with boundary $\Gamma = \partial \Omega$. We assume that the velocity field $\bm{\beta}(x)$ belongs to $\bm{W}^{1,\infty}(\Omega)$ and that the reaction coefficient $\gamma(x)$ is in $L^\infty(\Omega)$. Furthermore, we consider source terms $\bm{f} \in \bm{L}^2(\Omega)$ and boundary data $\bm{g} \in \bm{L}^2(\Gamma_{-})$. The inflow and outflow parts of the boundary are defined in the usual way:
\begin{align*}
    \Gamma_- &:= \{x \in \Gamma : \bm{\beta}(x) \cdot \bm{n}(x) < 0\}, \\
    \Gamma_+ &:= \{x \in \Gamma : \bm{\beta}(x) \cdot \bm{n}(x) \geq 0\},
\end{align*}
where $\bm{n}(x)$ denotes the unit outward normal vector to $\Gamma$ at point $x \in \Gamma$.

In this paper, we present a unified analysis of the vector advection equations \eqref{eq:3d h curl} and \eqref{eq:3d h div} using a local projection stabilization (LPS) method of arbitrary order. To the best of our knowledge, this represents the first development of an LPS method specifically tailored for vector advection problems. The key innovation is the construction of suitable $\bm{H}(\mathrm{d})$-conforming bubble spaces (with $\mathrm{d} = \mathrm{curl},\mathrm{div}$) along with establishing a corresponding local inf-sup condition. By enriching the finite element spaces with these specialized bubble functions, we show the existence of a modified interpolation operator $j_h$ that satisfies an orthogonality property in addition to the standard approximation properties. 
The availability of this operator enables a rigorous proof of optimal-order convergence for the proposed LPS method, representing the first detailed convergence analysis of an LPS approach for vector advection equations.

The remainder of this paper is organized as follows. In Section \ref{sec:preliminary}, we introduce the necessary preliminary results. Section \ref{sec:method} provides details of the discretization method. 
In Section \ref{sec:bubble}, we construct suitable bubble spaces, allowing us to establish existence of a modified interpolation operator, which is fundamental in our LPS approach. Section \ref{sec:estimate} is devoted to deriving optimal error estimates.
Numerical examples confirming our theoretical findings are presented in Section \ref{sec:numerical}. 
Finally, we conclude the paper with our conclusion in Section \ref{sec:conclusion}.
\section{Preliminaries}\label{sec:preliminary}
We use standard notation for Sobolev spaces, inner products, and the associated norms. Let $D \subset \mathbb{R}^3$ be a bounded domain. For any positive integer $s$ and exponent $p \in [1, \infty]$, we denote by $W^{s,p}(D)$ the Sobolev space equipped with its usual norm $\|\cdot\|_{s,p,D}$ and semi-norm $|\cdot|_{s,p,D}$. In the special case $p = 2$, we write $H^s(D) := W^{s,2}(D)$, with the corresponding norm and semi-norm simplified to $\|\cdot\|_{s,D} := \|\cdot\|_{s,2,D}$ and $|\cdot|_{s,D} := |\cdot|_{s,2,D}$, respectively. The $L^2$-inner product in $L^2(D)$ or $[L^2(D)]^d$ is denoted by $(\cdot, \cdot)_D$, and when the context is clear, we abbreviate it as $(\cdot, \cdot)$.

We denote the $\bm{H}(\mathrm{curl})$ advection operator and its formal adjoint by

 \begin{equation*}
	\mathcal{L}_{\bm{\bm{\beta}}}^{\mathrm{curl}} \bm{u} := \nabla (\bm{\bm{\beta}} \cdot \bm{u}) + (\nabla \times \bm{u}) \times \bm{\bm{\beta}},\quad
	(\mathcal{L}^{\mathrm{curl}}_{\bm{\bm{\beta}}})^{*} \bm{u} :=  - \bm{\bm{\beta}} \nabla\cdot \bm{u} + \nabla \times (\bm{\bm{\beta}}\times \bm{u}).
\end{equation*}
Similarly, the $H(\mathrm{div})$ advection operator and its formal adjoint are defined as
  \begin{equation*}
	\mathcal{L}^{\mathrm{div}}_{{\bm{\beta}}} \bm{u} :=  \bm{\bm{\beta}} \nabla\cdot \bm{u} - \nabla \times (\bm{\bm{\beta}}\times \bm{u}),\quad 
	(\mathcal{L}^{\mathrm{div}}_{\bm{\bm{\beta}}})^{*} \bm{u} := -\nabla (\bm{\bm{\beta}} \cdot \bm{u}) - (\nabla \times \bm{u}) \times \bm{\bm{\beta}}. 
\end{equation*}

For the sake of a unified analysis, we write the advection operator generically as
$\mathcal{L}_{\bm{\beta}} = \mathcal{L}^{\mathrm{curl}}_{\bm{\beta}}  \text{ or }  \mathcal{L}^{\mathrm{div}}_{\bm{\beta}}$,
depending on the context. Using this unified notation, equations~\eqref{eq:3d h curl} and~\eqref{eq:3d h div} can be written in a unified form
\begin{equation}
              \label{eq:3d unified}
						\left\{
						\begin{aligned}
		                \mathcal{L}_{{\bm{\beta}}} \bm{u} +  \gamma
							\bm{u}&= \bm{f}  &&\text{in }\Omega, \\
							\bm{u}&= \bm{g}  &&\text{on } \Gamma_{-}.
							\end{aligned}
						\right.
		\end{equation}
Additionally, we have an integration by parts formula for the advection operator
\begin{equation}
\label{int-by-parts}
(\mathcal{L}_{\bm{\bm{\beta}}}  \bm{u}, \bm{v})_D - (\bm{u}, \mathcal{L}^{*}_{\bm{\bm{\beta}}} \bm{v})_{D} = 
\left\langle\bm{u}, \bm{v}\right\rangle_{\partial D, \bm{\bm{\beta}}},
\end{equation}
where the boundary term is defined by $\left\langle\bm{u}, \bm{v}\right\rangle_{F, \bm{\bm{\beta}}} := \int_{F} (\bm{\bm{\beta}} \cdot \bm{n})(\bm{u} \cdot \bm{v}) dS$. Here, $F$ denotes either a single facet or a union of such facets, typically lying on the boundary of the domain.

Let $\mathcal{T}_h$ be a shape-regular triangulation of the domain $\Omega\subset \mathbb{R}^3$, composed of 3-dimensional simplex elements $K$. The diameter of an element $K$ is denoted by $h_K$, while the mesh parameter $h$ is defined as $h := \max_{K \in \mathcal{T}_h} h_K$. Each element boundary is composed of four triangular facets. To each facet $f$, we assign a unit normal vector $\bm{n}_f$, which either coincides with or opposes the outward unit normal $\bm{n}_{\partial K}$ of the element $K$ containing $f$. For a piecewise smooth vector field $\bm{u}$ defined on $\mathcal{T}_h$, we denote its two  traces on a shared facet $f$ by $\bm{u}^+$ and $\bm{u}^-$, where $\bm{u}^+ := \bm{u}|_{K^+}$ corresponds to the element $K^+$ whose outward normal coincides with $\bm{n}_f$. The jump and average operators are then defined as  
\[
[\bm{u}]_f := \bm{u}^+ - \bm{u}^-, \quad \{\bm{u}\}_f := \frac{1}{2}(\bm{u}^+ + \bm{u}^-).
\]
For boundary facets $f \subset \partial \Omega$, the normal $\bm{n}_f$ is assumed to point outward. We classify the facets into interior facets $\mathcal{F}^\circ$ and boundary facets $\mathcal{F}^\partial$, with the latter further partitioned into inflow facets $\mathcal{F}^\partial_-$ and outflow facets $\mathcal{F}^\partial_+$.

Throughout this paper, $C$ denotes a generic positive
constant that may depend on the shape-regularity of the triangulation but is independent of the mesh size.
 Let $\overline{\bm{\beta}}$ be a piecewise constant approximation of $\bm{\beta}$, satisfying the following approximation properties on each element $K\in\mathcal{T}_h$
\begin{equation}
    \begin{aligned}
        \|\overline{\bm{\beta}}\|_{0,\infty,K} &\leq \|\bm{\beta}\|_{0,\infty,K}, \\
        \|\overline{\bm{\beta}} - \bm{\beta}\|_{0,\infty,K} &\leq C h_K |\bm{\beta}|_{1,\infty,K},
    \end{aligned}
\end{equation}
where $C$ is a constant independent of the mesh size $h$.

For the sake of the well-posedness of equation \eqref{eq:3d unified}, we assume throughout the paper that
\begin{equation}
\label{coer:positive eig}
						\rho^\pm(x) := \lambda_{\min} \left[ 
						\gamma I  \pm  (\frac{\nabla
							{{\bm{\beta}}} + (\nabla {{\bm{\beta}}})^T}{2} - \frac{\nabla \cdot \bm \beta }{2}I)\right] \geq {\rho_0 >
							0}, \quad x\in \Omega,
\end{equation}
where $\rho_0$ is a constant. Here, the sign is positive if $\mathrm{d} = \mathrm{curl}$ and negative if $\mathrm{d} = \mathrm{div}$. This assumption may appear somewhat unusual, but it is the counterpart of the scalar condition $\gamma - \frac{1}{2} \mathrm{div} , \bm{\beta} > \rho_0$ within the framework of Friedrichs symmetric operators \cite{friedrichs1958symmetric}. For a detailed derivation in the case $d = \mathrm{curl}$, see \cite[Section 3]{heumann2013stabilized}.

Let $P_{r}(K)$ be the space of polynomials on $K$ with degree less than or equal to $r$ and $\bm{P_{r}}(K):=[P_{r}(K)]^3$ denote its vector-valued counterpart.   We will frequently use the following trace inequality (see, e.g., \cite{brenner2008mathematical}).
\begin{lemma}[trace inequality]
   Let $f$ be a face of $K \in \mathcal{T}_h$. For $v|_K \in H^1(K)$ the following estimate holds
\begin{equation*}
    \|v\|_{L^2(f)} \leq C \left( h_K^{-1/2} \|v\|_{L^2(K)} + h_K^{1/2} \|\nabla v\|_{L^2(K)} \right),
\end{equation*} 
where C is a constant independent of $h_K$.
\end{lemma}
We will also make use of the standard inverse inequality (see \cite{brenner2008mathematical}).
\begin{lemma}[inverse inequality]
 Let $v \in P_{r}(K)$ for some $r \geq 0$. Then the following estimate holds
\begin{equation*}
   \|\nabla v\|_{L^2(K)} \leq C h_K^{-1} \|v\|_{L^2(K)},
\end{equation*} 
where C is a constant independent of $h_K$.
\end{lemma}

\section{Local projection stabilization (LPS) methods}\label{sec:method}
In this section, we present the proposed LPS methods. Let $\bm{W}_h$ be an $\bm H(\mathrm{d})$ conforming space  with d~= curl or div, satisfying the following approximation property for $r \geq 1$:
\begin{equation}
\label{W_h:i_h_appro}
\begin{split}
\|\bm{u} - i_h \bm{u}\|_{l,K} &\leq C h^{r+1-l} \|\bm{u}\|_{r+1,K},\text{ for } l=0,1, \quad \forall K \in \mathcal{T}_h, \ \forall \bm{u} \in \bm{H}^{r+1}(\Omega),
\end{split}
\end{equation}
where $i_h$ is the canonical interpolation. 
For $\mathrm{d} = \mathrm{curl}$, $\bm{W}_h$ can be taken as the second-kind $r$-th order N\'ed\'elec space, while for $\mathrm{d} = \mathrm{div}$, $\bm{W}_h$ may be chosen as the $r$-th order BDM space (see, e.g., \cite{boffi2013mixed}).
The finite element space $\bm{V}_h$ for approximating the advection problem is defined as
\begin{equation}
\label{def: fem space}
\bm{V}_h:=\bm{W}_h+\bm{B}_h,
\end{equation}
 where $\bm{B}_h := \bm{B}_h^{\mathrm{curl}}$ or $\bm{B}_h^{\mathrm{div}}$ is an $\bm H(\mathrm{d})$ bubble space (with $\mathrm{d} = \mathrm{curl}, \mathrm{div})$ to be defined later.

Define the projection space
\begin{equation}
\label{def: D_h}
    \bm{D}_h  :=\{ \bm{v} \in L^2(\Omega) : \bm{v}|_K \in \bm{P}_{r-1}(K), \quad \forall K \in \mathcal{T}_h \},
\end{equation}
and let $\pi_h$ denote the $L^2$ projection from $\bm{L^2}(\Omega)$ onto $\bm{D}_h$. We introduce the fluctuation operator $\kappa_h := id-\pi_h$, where $id$ is the identity operator. The following approximation property holds
\begin{equation}
\label{D_h appro}
\|\kappa_h\bm{q}\|_{0,K} \leq C h^{l}|\bm{q}|_{l,K}, \quad \forall  0\leq l \leq r, ~\forall \bm{q} \in \bm{H}^r(\mathcal{T}_h).
\end{equation}

We derive the proposed scheme as follows. For a fixed element $K$, we test the variational form \eqref{eq:3d unified} with a function $\bm{v} \in \bm{V}_h$, integrate over $K$, and apply the integration-by-parts identity \eqref{int-by-parts} to obtain
$$
(\gamma \bm{u}, \bm{v})_K + (\bm{u}, \mathcal{L}^{*}_{\bm{\beta}} \bm{v})_K + \langle \bm{u}, \bm{v}\rangle_{\partial K, \bm{\beta}} = (f, \bm{v})_K.
$$
Summing over all elements $K$ gives:
$$
(\gamma \bm{u}, \bm{v})_\Omega + \sum_K (\bm{u}, \mathcal{L}^{*}_{\bm{\beta}} \bm{v})_K + \sum_K \langle \bm{u}, \bm{v}\rangle_{\partial K, \bm{\beta}} = (f, \bm{v})_\Omega.
$$
Rewriting the boundary term as a sum over facets yields
\[
(\gamma \bm{u}, \bm{v})_\Omega + \sum_K (\bm{u}, \mathcal{L}^{*}_{\bm{\beta}} \bm{v})_K + \sum_{f \in \mathcal{F}^\circ} \left( \langle \bm{u}^+, \bm{v}^+ \rangle_{f, \bm{\beta}} - \langle \bm{u}^-, \bm{v}^- \rangle_{f, \bm{\beta}} \right) + \sum_{f \in \mathcal{F}^\partial} \langle \bm{u}, \bm{v} \rangle_{f, \bm{\beta}} = (f, \bm{v})_\Omega,
\]
where, in the evaluation of the contribution over each facet $f$, the preassigned unit normal vector $\bm{n}_f$ is used. That is, $\langle \bm{u}, \bm{v} \rangle_{f, \bm{\beta}} = \int_f (\bm{\beta} \cdot \bm{n}_f)(\bm{u} \cdot \bm{v})\, dS$.

Next, we use the identity
\[
\langle \bm{u}^+, \bm{v}^+ \rangle_{f, \bm{\beta}} - \langle \bm{u}^-, \bm{v}^- \rangle_{f, \bm{\beta}} 
= \langle [\bm{u}]_f, \{\bm{v}\}_f \rangle_{f, \bm{\beta}} + \langle \{\bm{u}\}_f, [\bm{v}]_f \rangle_{f, \bm{\beta}},
\]
and observe that for smooth solutions $\bm{u}$ of the advection problem~\eqref{eq:3d unified}, the jump term vanishes, so that
\[
\langle \bm{u}^+, \bm{v}^+ \rangle_{f, \bm{\beta}} - \langle \bm{u}^-, \bm{v}^- \rangle_{f, \bm{\beta}} 
= \langle \{\bm{u}\}_f, [\bm{v}]_f \rangle_{f, \bm{\beta}}.
\]
This simplification holds since $\bm{u}$ is only non-smooth across characteristic faces where $\bm{\beta} \cdot \bm{n}_f = 0$, and for such faces the term $\langle \cdot, \cdot \rangle_{f, \bm{\beta}}$ vanishes.

The local projection discretization of \eqref{eq:3d unified} reads as: find $\bm{u}_h \in \bm{V}_h$ such that
\begin{equation}
\label{scheme}
a(\bm{u}_h, \bm{v}_h)+ S_h(\bm{u}_h, \bm{v}_h) = l(\bm{v}_h), \quad \forall \bm{v}_h \in \bm{V}_h,
\end{equation}
where the bilinear and linear forms are defined by
\[
\begin{aligned}
  a(\bm{u}, \bm{v}) :=& (\gamma \bm{u}, \bm{v})_\Omega + \sum_{K} (\bm{u}, \mathcal{L}^{*}_{{\bm{\beta}}} \bm{v})_K + \sum_{f \in \mathcal{F}^{\partial}_+} \left\langle\bm{u}, \bm{v}\right\rangle_{f, {\bm{\beta}}}
+ \sum_{f \in \mathcal{F}^{\circ}} \left\langle \{ \bm{u} \}_f , [\bm{v}]_f \right\rangle_{f, {\bm{\beta}}}, \\
l(\bm{v}) :=& (\bm{f}, \bm{v})_\Omega - \sum_{f \in \mathcal{F}_-^{\partial}} \left\langle\bm{g}, \bm{v}\right\rangle_{f, {\bm{\beta}}}.
\end{aligned}
\]
Here, the stabilization term is defined as
\begin{equation}
\begin{aligned}
    S_h(\bm{u}, \bm{v}) :=& S_h^1(\bm{u}, \bm{v})+S_h^2(\bm{u}, \bm{v})\\
:=&\sum_{f \in \mathcal{F}^{\circ}}\left\langle c_f [\bm{u}]_f , [\bm{v}]_f \right\rangle_{f, \bm{\bm{\beta}}}
+\sum_{K \in \mathcal{T}_h} h_K  (\kappa_h(\mathcal{L}^{*}_{\overline{\bm{\beta}}} \bm{u}), 
\kappa_h(\mathcal{L}^{*}_{\overline{\bm{\beta}}} \bm{v}))_K,
\end{aligned}
\end{equation}
where $c_f$ is a facewise constant of $\mathcal{O}(1)$ such that
\begin{equation}
\label{cf positive}
c_f \bm{\beta}\cdot \bm{n}_f > 0.  
\end{equation}
Noting that the approximation $\overline{\bm{\beta}}$ rather than $\bm{\beta}$ is adopted in $S_h^2(\cdot,\cdot)$.

We also define the following energy norm for LPS methods as 
\begin{equation}
\label{ene norm}
\begin{aligned}
  \|\bm{u}\|_h^2 := &\|\bm{u}\|^2_{L^2(\Omega)}  + \sum_{f \in \mathcal{F}^{\partial}_+} \|\bm{u}\|^2_{f, \frac{1}{2} {\bm{\beta}}} + \sum_{f \in \mathcal{F}^{\partial}_{-}} \|\bm{u}\|^2_{f, -\frac{1}{2} {\bm{\beta}}} +\sum_{f \in \mathcal{F}^{\circ}} \|[\bm{u}]_f\|^2_{f, c_f {\bm{\beta}}} + \sum_{K \in \mathcal{T}_h} h_K \|\kappa_h(\mathcal{L}^{*}_{\overline{\bm{\beta}}} \bm{u})\|^2_{0,K}.   
\end{aligned}
\end{equation}
After introducing the abstract framework for local projection stabilization, we will present the construction of the space $\bm{V}_h$ along with the design of modified interpolation operators, which form the foundation of the proposed method.

\section{Bubble space and modified interpolation operator}\label{sec:bubble}

In this section, we construct the bubble spaces $\bm{B}_h^{\mathrm{curl}}$ and $\bm{B}_h^{\mathrm{div}}$  which are essential for ensuring the existence of a modified interpolation operator that satisfies both an orthogonality condition and the standard approximation property.

 Let ${\lambda}_0$,  ${\lambda}_1$, ${\lambda}_2$, ${\lambda}_3$ denote the barycentric coordinates associated with the four vertices of a tetrahedral element $K\in \mathcal{T}_h$. We define  the $\bm H(\mathrm{curl})$ conforming bubble space on each element $K$ as
\begin{equation}
    \label{curl bubble}
    \bm{B}_h^{\mathrm{curl}}(K):=P_{r-1}(K)\bm{b}_1^{\rm curl}+P_{r-1}(K) \bm{b}_2^{\rm curl}+P_{r-1}(K) \bm{b}_3^{\rm curl}, \quad \forall K \in \mathcal{T}_h,
\end{equation}
where the bubble functions are given by ${\bm{b}}_1^{\rm curl} := {\lambda}_2 {\lambda}_3 {\lambda}_0 {\bm{n}}_1$, 
${\bm{b}}_2^{\rm curl} := {\lambda}_3 {\lambda}_0{\lambda}_1 {\bm{n}}_2$,
${\bm{b}}_3^{\rm curl} := {\lambda}_0 {\lambda_1} {\lambda_2} {\bm{n}}_3$, and
$\bm{n_1}$, $\bm{n_2}$, $\bm{n_3}$ are the unit outward normal vectors to arbitrary three faces of $K$. By definition, it is clear that vector functions in $\bm{B}_h^{\mathrm{curl}}(K)$ vanish in their tangential components on the boundary $\partial K$.

Similarly, the $\bm H(\mathrm{div})$ conforming bubble space is defined as
\begin{equation}
    \label{div bubble}
    \bm{B}_h^{\mathrm{div}}(K):=P_{r-1}(K)\bm{b}_1^{\rm div}+P_{r-1}(K) \bm{b}_2^{\rm div}+P_{r-1}(K) \bm{b}_3^{\rm div}, \quad \forall K \in \mathcal{T}_h,
\end{equation}
with bubble functions ${\bm{b}}_1^{\rm div} := {\lambda}_0 {\lambda}_1 \bm{t}_{01}$, 
${\bm{b}}_2^{\rm div} := {\lambda}_0 {\lambda}_2 \bm{t}_{02}$,
${\bm{b}}_3^{\rm div} := {\lambda}_0 {\lambda_3}  \bm{t}_{03}$,
where $\bm{t}_{01}$, $\bm{t}_{02}$ and $\bm{t}_{03}$ are the unit tangent vectors along arbitrary three edges of $K$ that share the common vertex. Similarly, it is clear that vector functions in $\bm{B}_h^{\mathrm{div}}(K)$ have zero normal components on the boundary $\partial K$. See Figure~\ref{fig: normal and tangent} for an illustration of the geometric configuration of the normal and tangent vectors used in the construction.


\usetikzlibrary{3d, arrows.meta, calc}

\begin{figure}[!htbp]
    \centering
    \begin{tikzpicture}[scale=3, >=Stealth]

\begin{scope}[xshift=-2cm]
    
    \coordinate (0) at (0,0,0);
    \coordinate (1) at (1,0,0);
    \coordinate (2) at (0.5,0.866,0);
    \coordinate (3) at (1,0.289,1.616);
    
    \draw[dashed, thick] (0) -- (1); 
    \draw[thick] (0) -- (2);
    \draw[thick] (1) -- (2);
    \draw[thick] (0) -- (3);
    \draw[thick] (1) -- (3);
    \draw[thick] (2) -- (3);
    
    \node[left] at (0) {0};
    \node[right] at (1) {1};
    \node[above] at (2) {2};
    \node[below left] at (3) {3};
    
     \draw[->, black, thick] (0.5,0.385,0.272) -- ++(-0.5,-0.289,-0.816) node[below] {$\bm{n}_1$};
    \coordinate (n1start) at (0.7,0,0.5);
    \coordinate (n1end) at ($(n1start)+(0,-0.3,0)$);
    \draw[dashed, thick] (n1start) -- ($(n1start)!0.25!(n1end)$); 
    \draw[->, solid, thick] ($(n1start)!0.25!(n1end)$) -- (n1end) node[right] {$\bm{n}_2$}; 

    \coordinate (n1start) at (0.6,0.4,0);
    \coordinate (n1end) at ($(n1start)+(0,0,-0.5)$);
    \draw[dashed, thick] (n1start) -- ($(n1start)!0.6!(n1end)$); 
    \draw[->, solid, thick] ($(n1start)!0.6!(n1end)$) -- (n1end) node[right] {$\bm{n}_3$}; 
\end{scope}

\begin{scope}[xshift=-0.3cm]
     \coordinate (0) at (0,0,0);
    \coordinate (1) at (1,0,0);
    \coordinate (2) at (0.5,0.866,0);
    \coordinate (3) at (1.3,0.289,1.616);
    
    \draw[dashed, thick] (0) -- (1); 
    \draw[thick] (0) -- (2);
    \draw[thick] (1) -- (2);
    \draw[thick] (0) -- (3);
    \draw[thick] (1) -- (3);
    \draw[thick] (2) -- (3);
    
    \node[left] at (0) {0};
    \node[right] at (1) {1};
    \node[above] at (2) {2};
    \node[below left] at (3) {3};
    
    \draw[->,dashed, thick] (0) -- ($0.6*($(1)-(0)$)$) node[pos=0.6, above] {$\bm{t}_{01}$};
    \draw[->, thick] (0) -- ($0.5*($(2)-(0)$)$) node[pos=0.6, left] {$\bm{t}_{02}$};
    \draw[->, thick] (0) -- ($0.8*($(3)-(0)$)$) node[pos=0.8, left] {$\bm{t}_{03}$};
\end{scope}
\end{tikzpicture}
   \caption{\centering Normal and tangent vectors associated with the $H(\mathrm{d})$ bubble functions.}
    \label{fig: normal and tangent}
\end{figure}
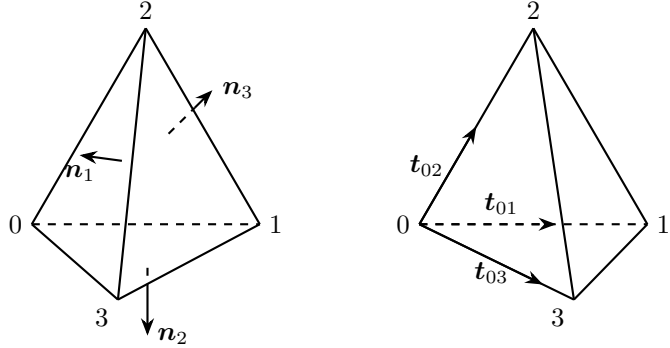

The following pair of lemmas establishes local inf-sup conditions, which are a key ingredient in our analysis.
\begin{lemma}[curl local inf-sup]
\label{curl:local inf sup}
Let $\bm{B}_h^{\mathrm{curl}}(K)$ and $\bm{D}_h(K) := \bm{D}_h|_K$ be given as in \eqref{curl bubble} and \eqref{def: D_h}, respectively, for a fixed $r \geq 1$. Then, the local inf-sup condition holds
\begin{equation}
\label{main ingre}
\inf_{\bm{q}_h \in \bm{D}_h(K)} \sup_{\bm{v}_h \in \bm{B}_h^{\mathrm{curl}}(K)} \frac{({\bm{v}_h}, \bm{q}_h)_K}{\|\bm{v}_h\|_{0,K} \|\bm{q}_h\|_{0,K}} \ge c > 0,
\end{equation}  
where $c$ is a constant independent of $h_K$.
\end{lemma}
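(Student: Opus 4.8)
The plan is to reduce the inf-sup condition to a finite-dimensional, scale-invariant statement on a reference element and then verify it there by a dimension-counting / injectivity argument. Since both $\bm{B}_h^{\mathrm{curl}}(K)$ and $\bm{D}_h(K)$ are spaces of dimension $3\dim P_{r-1}(K)$ (the bubble space is spanned by $P_{r-1}(K)\bm{b}_i^{\mathrm{curl}}$ for $i=1,2,3$, and the three products $\lambda_2\lambda_3\lambda_0$, $\lambda_3\lambda_0\lambda_1$, $\lambda_0\lambda_1\lambda_2$ together with the three linearly independent normals $\bm{n}_i$ make the representation unique), it suffices to show that the bilinear pairing $(\cdot,\cdot)_K$ restricted to $\bm{B}_h^{\mathrm{curl}}(K)\times\bm{D}_h(K)$ is non-degenerate. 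By the equal dimensions, non-degeneracy on one side implies it on the other, so I would show: if $\bm{v}_h\in\bm{B}_h^{\mathrm{curl}}(K)$ satisfies $(\bm{v}_h,\bm{q}_h)_K=0$ for all $\bm{q}_h\in\bm{D}_h(K)$, then $\bm{v}_h=0$.

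To see this, write $\bm{v}_h=\sum_{i=1}^3 p_i\,\bm{b}_i^{\mathrm{curl}}$ with $p_i\in P_{r-1}(K)$. Testing against $\bm{q}_h\in\bm{D}_h(K)=\bm{P}_{r-1}(K)$ and using that the $\bm{n}_i$ are a fixed basis of $\mathbb{R}^3$, the orthogonality is equivalent to $\int_K (\lambda_2\lambda_3\lambda_0\,p_1 + \text{(rotated terms)})\,q=0$ componentwise, i.e. each of the three scalar functions $\lambda_2\lambda_3\lambda_0\,p_1$, $\lambda_3\lambda_0\lambda_1\,p_2$, $\lambda_0\lambda_1\lambda_2\,p_3$ (after resolving against the dual basis of $\{\bm{n}_i\}$) is $L^2(K)$-orthogonal to all of $P_{r-1}(K)$. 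I would then invoke the classical fact that multiplication by a fixed positive weight such as the product of three barycentric coordinates defines an injective map from $P_{r-1}(K)$ into $(P_{r-1}(K))^\perp$-complement — more precisely, that the bilinear form $(p,q)\mapsto\int_K w\,p\,q$ with $w=\lambda_i\lambda_j\lambda_k>0$ a.e. is positive definite on $P_{r-1}(K)$, hence $\int_K w\,p\,q=0$ for all $q\in P_{r-1}(K)$ forces $p=0$. This gives $p_1=p_2=p_3=0$ and hence $\bm{v}_h=0$.

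Having established non-degeneracy, the inf-sup \emph{constant} on the reference element $\hat K$ is then automatically positive: it equals the smallest singular value of the (invertible) matrix representing the pairing in fixed bases, which is a strictly positive number depending only on $\hat K$ and $r$. The final step is a scaling argument: for a general $K\in\mathcal{T}_h$, pull back to $\hat K$ via the affine map, note that barycentric coordinates are affine-invariant (so the bubble functions transform correctly) and that $\bm{P}_{r-1}$ is affine-invariant as well; the $L^2$ inner product, the bubble $\bm{H}(\mathrm{curl})$ norm, and $\|\cdot\|_{0,K}$ all scale by the same power of $h_K$ (Jacobian factors) so that the quotient in \eqref{main ingre} is comparable to its reference-element counterpart up to constants depending only on shape-regularity. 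Here one must be slightly careful because the unit normals $\bm{n}_i$ of $K$ are not the images of the reference normals under the affine map; however, since the three chosen faces of $K$ have normals that are uniformly linearly independent under shape-regularity (the Gram matrix of $\{\bm{n}_i\}$ is uniformly bounded and uniformly invertible), the change of basis between $\{\bm{n}_i\}$ and any fixed frame costs only shape-regularity-dependent constants.

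The main obstacle I anticipate is exactly this last point — controlling the constants uniformly over the family $\mathcal{T}_h$, since the normals defining the bubbles depend on $K$ in a way that is not simply the affine pullback of a reference configuration. The clean way around it is to absorb the normal directions into the argument abstractly: show the reference inf-sup for \emph{every} admissible triple of linearly independent unit vectors with uniformly bounded Gram determinant from below, and observe this family is compact, so the infimum of the inf-sup constants over it is still strictly positive. The other routine-but-nontrivial ingredient is the positive-definiteness of the weighted mass form $\int_K w\,p\,q$; this is standard but should be cited or given a one-line justification (the weight is strictly positive on the interior, so $\int_K w\,p^2=0$ implies $p\equiv0$ on a set of full measure, hence $p=0$ as a polynomial).
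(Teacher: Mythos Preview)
Your argument is correct but follows a different route from the paper. You argue by non-degeneracy plus equal dimensions: write $\bm{v}_h=\sum_i p_i\bm{b}_i^{\mathrm{curl}}$, test against $q\,\bm{m}_j$ with $\{\bm{m}_j\}$ dual to $\{\bm{n}_i\}$, and conclude $p_j=0$ from positive-definiteness of the weighted mass form $\int_K w_j\,p\,q$. This is sound, and your identification of the obstacle (the physical normals $\bm{n}_i$ are not affine pullbacks of reference normals) together with the compactness workaround over admissible triples of unit vectors is a legitimate fix under shape-regularity.

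The paper instead gives a direct Fortin-type construction: on $\hat K$ it sets $\hat{\bm{v}}_h=\sum_i(\hat{\bm n}_i\cdot\hat{\bm q}_h)\,\hat{\bm b}_i^{\mathrm{curl}}$ and checks $(\hat{\bm v}_h,\hat{\bm q}_h)_{\hat K}\ge C\|\hat{\bm q}_h\|_{0,\hat K}^2$ and $\|\hat{\bm v}_h\|_{0,\hat K}\le C\|\hat{\bm q}_h\|_{0,\hat K}$ by the same positivity-of-weights idea you use. The key difference is in the scaling step: rather than a plain change of variables followed by compactness over normal configurations, the paper applies the \emph{two} Piola transforms in tandem---covariant $\mathcal{H}$ to $\bm v_h$ and contravariant $\mathcal{G}$ to $\bm q_h$---so that the pairing is preserved exactly, $(\bm v_h,\bm q_h)_K=(\hat{\bm v}_h,\hat{\bm q}_h)_{\hat K}$, while $\mathcal{H}$ automatically carries reference normals to (scalings of) physical normals. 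This dissolves your ``main obstacle'' without any compactness argument, at the cost only of the standard norm bounds $\|B\|\,\|B^{-1}\|$ controlled by shape-regularity. Your approach is more abstract and would transfer to situations where an explicit test function is not obvious; the paper's is constructive, yields a cleaner constant, and sidesteps the parametrized-family issue entirely via the Piola duality.
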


\begin{proof}
For simplicity, we omit the superscript $\mathrm{curl}$ and write $\bm{B}_h(K) := \bm{B}_h^{\mathrm{curl}}(K)$. For the reference element $\hat{K}$, define $\bm{\hat{B}}(\hat{K})$ as a counterpart of $\bm{B}_h(K)$ by
\[\bm{\hat{B}}(\hat{K})=P_{r-1}(\hat{K})\hat{\bm{b}}_1^{\rm curl}+P_{r-1}(\hat{K})\hat{\bm{b}}_2^{\rm curl}+P_{r-1}(\hat{K})\hat{\bm{b}}_3^{\rm curl},
\] 
where  $$\hat{\bm{b}}_1^{\rm curl} = \hat{\lambda}_2 \hat{\lambda}_3 \hat{\lambda}_0 \hat{\bm{n}}_1,\quad 
\hat{\bm{b}}_2^{\rm curl} = \hat{\lambda}_3 \hat{\lambda}_0\hat{\lambda}_1 \hat{\bm{n}}_2,\quad 
\hat{\bm{b}}_3^{\rm curl} = \hat{\lambda}_0 \hat{\lambda_1} \hat{\lambda_2} \hat{\bm{n}}_3,$$
and $\hat{\bm{n}}_i$ are unit outward normals to three faces of $\hat{K}$.

Let $F_K(\hat{x})=B\hat{x}+b$ be the affine mapping from $\hat{K}$  to $K$. Denote the Piola transformations (see, e.g. \cite{boffi2013mixed})
\begin{equation*}
\mathcal{G}\hat{\bm{q}}_h :=|\det(B)|^{-1}B\hat{\bm{q}}_h \circ F_K^{-1},
\end{equation*}
\begin{equation*}
\mathcal{H}\hat{\bm{v}}_h :=B^{-T}\hat{\bm{v}}_h \circ F_K^{-1}.
\end{equation*}
Taking $\hat{\bm{q}}_h $ and $\hat{\bm{v}}_h$ such that $\bm{q}_h=\mathcal{G}\hat{\bm{q}}_h $,  $\bm{v}_h=\mathcal{H}\hat{\bm{v}}_h$, then we have  
\begin{equation*}
(\bm{v}_h, \bm{q}_h)_K=(\hat{\bm{v}}_h,\hat{\bm{q}}_h )_{\hat{K}}.
\end{equation*}
Standard scaling arguments yield (see, e.g., \cite{boffi2013mixed})
\begin{equation*}
\|\bm{q}_h\|_{0,K} \leq |\det(B)|^{-1/2}\|B \| \|\hat{\bm{q}}_h \|_{0,K},
\quad
 \|\bm{v}_h\|_{0,K} \leq |\det(B)|^{1/2}\|B^{-1} \| \|\hat{\bm{v}}_h\|_{0,K}.
\end{equation*}
Shape-regularity of the mesh then gives
\begin{equation}
\inf_{\bm{q}_h \in \bm{D}_h(K)} \sup_{\bm{v}_h \in \bm{B}_h(K)} \frac{(\bm{v}_h, \bm{q}_h)_K}{\|\bm{v}_h\|_{0,K} \|\bm{q}_h\|_{0,K}} 
\ge C
\inf_{\hat{\bm{q}}_h \in \bm{P_{r-1}}(\hat{K})} \sup_{\hat{\bm{v}}_h \in \bm{\hat{B}}(\hat{K})} \frac{(\hat{\bm{v}}_h,\hat{\bm{q}}_h)_{\hat{K}}}{\|\hat{\bm{v}}_h\|_{0,{\hat{K}}} \|\hat{\bm{q}}_h\|_{0,\hat{K}}} .
\label{eq:infsuptest}
\end{equation}
Given $\hat{\bm{q}}_h \in \bm{P_{r-1}}(\hat{K})$, we define
\begin{equation*}
\hat{\bm{v}}_h := \sum_{i=1}^3 (\hat{\bm{n}}_i \cdot \hat{\bm{q}}_h) \hat{\bm{b}}_i^{\rm curl}\in \bm{\hat{B}}(\hat{K}).
\end{equation*}
Then we have
\begin{equation}
\begin{aligned}
(\hat{\bm{v}}_h, \hat{\bm{q}}_h)_{\hat{K}} 
&= \int_{\hat{K}} \hat{\lambda}_2 \hat{\lambda}_3 \hat{\lambda}_0 |\hat{\bm{q}}_h \cdot \hat{\bm{n}}_1|^2 + \hat{\lambda}_3 \hat{\lambda}_0 \hat{\lambda}_1|\hat{\bm{q}}_h \cdot \hat{\bm{n}}_2|^2
+\hat{\lambda}_0 \hat{\lambda}_1 \hat{\lambda}_2|\hat{\bm{q}}_h \cdot \hat{\bm{n}}_3|^2 d\hat{x} \\
&\ge \int_{\hat{K}} ( |\hat{\bm{q}}_h \cdot \hat{\bm{n}}_1|^2 + |\hat{\bm{q}}_h \cdot \hat{\bm{n}}_2|^2 + |\hat{\bm{q}}_h \cdot \hat{\bm{n}}_3|^2 
) \hat{\lambda}_0 \hat{\lambda}_1 \hat{\lambda}_2 \hat{\lambda}_3  d\hat{x} \\
&\ge C \int_{\hat{K}} |\hat{\bm{q}}_h|^2 d\hat{x}, \label{inner pro}
\end{aligned}
\end{equation}
using equivalence of norms over finite-dimensional spaces and positivity of barycentric coordinate functions. Moreover, since $\|\hat{\bm{b}}_i\|_{0,\infty} \le 1$, we have
\begin{equation}
\left\| \hat{\bm{v}}_h \right\|_{0,\hat{K}} = \sum_{i=1}^3 \left\| (\hat{\bm{n}}_i \cdot \hat{\bm{q}}_h) \hat{\bm{b}}_i^{\rm curl} \right\|_{0,\hat{K}} \leq C \left\| \hat{\bm{q}}_h \right\|_{0,\hat{K}}.
\label{norm}
\end{equation}
Combining \eqref{eq:infsuptest}, \eqref{inner pro}, and \eqref{norm} gives the desired result.
\end{proof}

\begin{lemma}[div local inf-sup]
\label{div:local inf sup}
Let $\bm{B}_h^{\mathrm{div}}(K)$ and $\bm{D}_h(K) := \bm{D}_h|_K$ be given as in \eqref{div bubble} and \eqref{def: D_h}, respectively, for a fixed $r \geq 1$. Then the local inf-sup condition holds
\begin{equation}
\label{main ingre div}
\inf_{\bm{q}_h \in \bm{D}_h(K)} \sup_{\bm{\bm{v}_h} \in \bm{B}_h^{\mathrm{div}}(K)} \frac{(\bm{\bm{v}_h}, \bm{q}_h)_K}{\|\bm{v}_h\|_{0,K} \|\bm{q}_h\|_{0,K}} \ge c > 0,
\end{equation}  
with $c$ independent of  of $h_K$.
\end{lemma}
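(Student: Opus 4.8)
The plan is to mirror the argument for Lemma~\ref{curl:local inf sup}, with the face-normal vectors replaced by the edge-tangent vectors. First I would pass to the reference element $\hat{K}$: for $\bm{q}_h \in \bm{D}_h(K)$ use the covariant Piola transform $\mathcal{H}$, and for $\bm{v}_h \in \bm{B}_h^{\mathrm{div}}(K)$ use the contravariant Piola transform $\mathcal{G}$. Since $B\hat{\bm{t}}_{0i}$ is parallel to the edge of $K$ joining vertices $0$ and $i$ (hence to $\bm{t}_{0i}$), and barycentric coordinates are affine invariant, $\mathcal{G}$ maps $\bm{\hat{B}}^{\mathrm{div}}(\hat{K}) := \sum_{i=1}^3 P_{r-1}(\hat{K})\hat{\bm{b}}_i^{\mathrm{div}}$ onto $\bm{B}_h^{\mathrm{div}}(K)$, and the pair $(\mathcal{G},\mathcal{H})$ preserves the $L^2$-pairing, $(\bm{v}_h,\bm{q}_h)_K = (\hat{\bm{v}}_h,\hat{\bm{q}}_h)_{\hat{K}}$. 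Combining this with the standard scaling estimates for the two Piola transforms and shape-regularity of the mesh, the inf-sup quotient on $K$ is bounded from below by a constant times the corresponding quotient on $\hat{K}$ over $\bm{\hat{B}}^{\mathrm{div}}(\hat{K})$ and $\bm{P}_{r-1}(\hat{K})$.

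It then remains to verify the inf-sup condition on the fixed reference element. Given $\hat{\bm{q}}_h \in \bm{P}_{r-1}(\hat{K})$, I would take the explicit test function
\[
\hat{\bm{v}}_h := \sum_{i=1}^3 (\hat{\bm{t}}_{0i} \cdot \hat{\bm{q}}_h)\,\hat{\bm{b}}_i^{\mathrm{div}} \in \bm{\hat{B}}^{\mathrm{div}}(\hat{K}).
\]
Since $\hat{\bm{b}}_i^{\mathrm{div}} = \hat{\lambda}_0 \hat{\lambda}_i \hat{\bm{t}}_{0i}$, this gives
\[
(\hat{\bm{v}}_h,\hat{\bm{q}}_h)_{\hat{K}} = \int_{\hat{K}} \sum_{i=1}^3 \hat{\lambda}_0 \hat{\lambda}_i\, |\hat{\bm{q}}_h \cdot \hat{\bm{t}}_{0i}|^2 \, d\hat{x} \ge \int_{\hat{K}} \Big( \sum_{i=1}^3 |\hat{\bm{q}}_h \cdot \hat{\bm{t}}_{0i}|^2 \Big) \hat{\lambda}_0 \hat{\lambda}_1 \hat{\lambda}_2 \hat{\lambda}_3 \, d\hat{x},
\]
where I used $0 \le \hat{\lambda}_j \le 1$ so that $\hat{\lambda}_0 \hat{\lambda}_i \ge \hat{\lambda}_0 \hat{\lambda}_1 \hat{\lambda}_2 \hat{\lambda}_3$. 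Because $\hat{\bm{t}}_{01},\hat{\bm{t}}_{02},\hat{\bm{t}}_{03}$ are the edge directions at a vertex of a nondegenerate tetrahedron, they form a basis of $\mathbb{R}^3$, hence $\sum_{i=1}^3 |\bm{\xi} \cdot \hat{\bm{t}}_{0i}|^2 \ge c |\bm{\xi}|^2$ for all $\bm{\xi} \in \mathbb{R}^3$; together with the equivalence of the weighted $L^2$-norm (with weight $\hat{\lambda}_0 \hat{\lambda}_1 \hat{\lambda}_2 \hat{\lambda}_3$) and the standard $L^2$-norm on the finite-dimensional space $\bm{P}_{r-1}(\hat{K})$, this yields $(\hat{\bm{v}}_h,\hat{\bm{q}}_h)_{\hat{K}} \ge C \|\hat{\bm{q}}_h\|_{0,\hat{K}}^2$. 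For the denominator, $\|\hat{\bm{b}}_i^{\mathrm{div}}\|_{0,\infty,\hat{K}} \le 1$ gives $\|\hat{\bm{v}}_h\|_{0,\hat{K}} \le \sum_{i=1}^3 \|(\hat{\bm{t}}_{0i} \cdot \hat{\bm{q}}_h)\hat{\bm{b}}_i^{\mathrm{div}}\|_{0,\hat{K}} \le C \|\hat{\bm{q}}_h\|_{0,\hat{K}}$. Combining the two bounds with the scaling reduction completes the proof.

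The argument is essentially parallel to the curl case, and I do not expect a serious obstacle. The only point needing a little care is that the div-bubbles carry the weights $\hat{\lambda}_0 \hat{\lambda}_i$, which are products of only two barycentric coordinates rather than the product of three appearing in the curl-bubbles, so one must first bound these below by the full product $\hat{\lambda}_0 \hat{\lambda}_1 \hat{\lambda}_2 \hat{\lambda}_3$ before invoking norm equivalence. One should also double-check the Piola bookkeeping, i.e.\ that the contravariant transform indeed maps the reference bubble space onto $\bm{B}_h^{\mathrm{div}}(K)$, so that the reduction to $\hat{K}$ is legitimate.
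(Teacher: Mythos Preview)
Your proposal is correct and follows essentially the same route as the paper: the same swap of Piola transforms ($\mathcal{G}$ for the div-bubbles, $\mathcal{H}$ for $\bm{q}_h$), the same explicit test function $\hat{\bm{v}}_h = \sum_i (\hat{\bm{t}}_{0i}\cdot\hat{\bm{q}}_h)\hat{\bm{b}}_i^{\mathrm{div}}$, and the same lower bound $\hat{\lambda}_0\hat{\lambda}_i \ge \hat{\lambda}_0\hat{\lambda}_1\hat{\lambda}_2\hat{\lambda}_3$ followed by norm equivalence on $\bm{P}_{r-1}(\hat{K})$. Your additional remarks (that $B\hat{\bm{t}}_{0i}$ is parallel to $\bm{t}_{0i}$ so $\mathcal{G}$ indeed maps the reference bubble space onto $\bm{B}_h^{\mathrm{div}}(K)$, and that the three edge directions at a vertex form a basis) make explicit two points the paper leaves implicit.
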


\begin{proof}
The proof follows the same structure as Lemma~\ref{curl:local inf sup}. We omit the superscript div and denote $\bm{B}_h(K) := \bm{B}_h^{\mathrm{div}}(K)$. Define the reference bubble space $\bm{\hat{B}}(\hat{K})$ analogously, 
\[\bm{\hat{B}}(\hat{K})=P_{r-1}(\hat{K})\hat{\bm{b}}_1^{\rm div}+P_{r-1}(\hat{K})\hat{\bm{b}}_2^{\rm div}+P_{r-1}(\hat{K})\hat{\bm{b}}_3^{\rm div}.
\] 
Unlike the curl case, we apply the Piola transformations by choosing $\hat{\bm{q}}_h$ and $\hat{\bm{v}}_h$ such that $\bm{q}_h = \mathcal{H} \hat{\bm{q}}_h$ and $\bm{v}_h = \mathcal{G} \hat{\bm{v}}_h$, with the property
\begin{equation*}
(\bm{v}_h, \bm{q}_h)_K=(\hat{\bm{v}}_h,\hat{\bm{q}}_h )_{\hat{K}}, 
\end{equation*}
and by the same reasoning as before,
\begin{equation}
\inf_{\bm{q}_h \in \bm{D}_h(K)} \sup_{\bm{v}_h \in \bm{B}_h(K)} \frac{(\bm{v}_h, \bm{q}_h)_K}{\|\bm{v}_h\|_{0,K} \|\bm{q}_h\|_{0,K}} 
\ge C
\inf_{\hat{\bm{q}}_h \in \bm{P_{r-1}}(\hat{K})} \sup_{\hat{\bm{v}}_h \in \bm{\hat{B}}(\hat{K})} \frac{(\hat{\bm{v}}_h,\hat{\bm{q}}_h)_{\hat{K}}}{\|\hat{\bm{v}}_h\|_{0,{\hat{K}}} \|\hat{\bm{q}}_h\|_{0,\hat{K}}} .
\label{eq:infsuptest div}
\end{equation}
Given $\hat{\bm{q}}_h \in \bm{P_{r-1}}(\hat{K})$, let 
\begin{equation*}
\hat{\bm{v}}_h := \sum_{i=1}^3 (\hat{\bm{t}}_{0i} \cdot \hat{\bm{q}}_h) \hat{\bm{b}}_i^{\rm div}\in \bm{\hat{B}}(\hat{K}), 
\end{equation*}
then we derive
\begin{equation}
\label{inner-pro div}
\begin{aligned}
(\hat{\bm{v}}_h, \hat{\bm{q}}_h)_{\hat{K}} &= \int_{\hat{K}} \hat{\lambda}_0 \hat{\lambda}_1  |\hat{\bm{q}}_h \cdot \hat{\bm{t}}_{01}|^2 + \hat{\lambda}_0 \hat{\lambda}_2|\hat{\bm{q}}_h \cdot \hat{\bm{t}}_{02}|^2
+\hat{\lambda}_0 \hat{\lambda}_3 |\hat{\bm{q}}_h \cdot \hat{\bm{t}}_{03}|^2 d\hat{x}    \\
&\ge \int_{\hat{K}} ( |\hat{\bm{q}}_h \cdot \hat{\bm{t}}_{01}|^2 + |\hat{\bm{q}}_h \cdot \hat{\bm{t}}_{02}|^2 + |\hat{\bm{q}}_h \cdot \hat{\bm{t}}_{03}|^2) \hat{\lambda}_0 \hat{\lambda}_1 \hat{\lambda}_2 \hat{\lambda}_3  d\hat{x} \\
&\ge C \int_{\hat{K}} |\hat{\bm{q}}_h|^2 d\hat{x},
\end{aligned}
\end{equation}
and similarly,
\begin{equation}
\left\| \hat{\bm{v}}_h \right\|_{0,\hat{K}} = \sum_{i=1}^3 \left\| (\hat{\bm{t}}_{0i} \cdot \hat{\bm{q}}_h) \hat{\bm{b}}_i^{\rm div} \right\|_{0,\hat{K}} \leq C \left\| \hat{\bm{q}}_h \right\|_{0,\hat{K}}.
\label{norm div}
\end{equation}
Substituting \eqref{inner-pro div} and \eqref{norm div} into \eqref{eq:infsuptest div} gives the desired result.
\end{proof}

	The preceding two lemmas treat the cases $\mathrm{d} = \mathrm{curl}$ and $\mathrm{d} = \mathrm{div}$ separately. Their arguments fundamentally rely on three key components: the preservation of inner products under the Piola transformation, the structure of the $\bm{H}(\mathrm{d})$-conforming bubble space, and its compatibility with the projection space. Collectively, these lemmas provide the essential foundation for the proof of Theorem~\ref{thm: existence of j_h}, as detailed below.

\begin{remark}
Similar results can also be established for the two-level method \cite{matthies2007unified} and the so-called generalized local projection method \cite{knobloch2010generalization}, both of which do not require bubble enrichment. However, since these approaches are fundamentally based on macroelement local projections, they naturally lead to larger stencils and increased complexity in data structure management. A detailed investigation of these alternative methods will be pursued in future work.
\end{remark}

We now follow the approach used in \cite[Lemma 1]{becker2001finite} and \cite[Theorem 2.2]{matthies2007unified} for the $H(\mathrm{grad})$ case, and extend it to the $\bm H(\mathrm{curl})$ and $\bm H(\mathrm{div})$ settings. Our goal is to establish the existence of a modified interpolation operator $j_h$ that satisfies both an orthogonality property and the standard approximation property. This operator plays a critical role in the error analysis of the proposed method.

\begin{theorem}[modified interpolation operator]
\label{thm: existence of j_h}
Let $\bm{V}_h(K)$ and $\bm{D}_h(K)$ be defined as in \eqref{def: fem space} and \eqref{def: D_h}, then there exists a modified interpolation operator $j_h : \bm{H}^{1+\epsilon}(\Omega) \to \bm{V}_h$ such that the following orthogonality and approximation properties hold:
\begin{equation}
\label{eq:orth}
    (\bm{v} - j_h \bm{\bm{v}}, \bm{q}_h) = 0,  
\quad \forall \bm{q}_h \in \bm{D}_h,  \forall \bm{v} \in \bm{H}^{1+\epsilon}(\Omega),
\end{equation}
and
\begin{equation}
\begin{aligned}
\|\bm{v} - j_h \bm{v}\|_{0,K} +h_K |\bm{\bm{v}} - j_h \bm{v}|_{1,K} \leq C h_K^l \|\bm{v}\|_{l,K},
\quad \forall \bm{v} \in \bm{H}^{1+\epsilon}(\Omega) \cap \bm{H}^l(\Omega), 1 \leq l \leq r + 1.
\end{aligned}
\label{eq:appro}
\end{equation}
Here, $\epsilon > 0$ is an arbitrarily small constant.
\end{theorem}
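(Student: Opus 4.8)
The plan is to construct $j_h$ explicitly as a correction of the canonical interpolation $i_h$ by a bubble contribution, following the classical two-level argument of \cite{becker2001finite,matthies2007unified}. Given $\bm{v} \in \bm{H}^{1+\epsilon}(\Omega)$, first set $\bm{w} := i_h \bm{v} \in \bm{W}_h \subset \bm{V}_h$; by \eqref{W_h:i_h_appro} this already satisfies the approximation bound \eqref{eq:appro}. The residual $\bm{v} - \bm{w}$ need not be $\bm{D}_h$-orthogonal, so I would add an element-local bubble correction $\bm{b} \in \bm{B}_h$ chosen, on each $K$, to enforce $(\bm{v} - \bm{w} - \bm{b}, \bm{q}_h)_K = 0$ for all $\bm{q}_h \in \bm{D}_h(K)$. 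Equivalently, $\bm{b}|_K$ is determined by the finite-dimensional problem: find $\bm{b}|_K \in \bm{B}_h^{\mathrm{d}}(K)$ such that $(\bm{b}|_K, \bm{q}_h)_K = (\bm{v} - \bm{w}, \bm{q}_h)_K$ for all $\bm{q}_h \in \bm{D}_h(K)$. Since functions in $\bm{B}_h^{\mathrm{d}}(K)$ have vanishing tangential (resp.\ normal) traces on $\partial K$, the piecewise-defined $\bm{b}$ lies in $\bm{B}_h$, hence $j_h \bm{v} := i_h \bm{v} + \bm{b} \in \bm{V}_h$; and summing the local orthogonality over $K$ gives \eqref{eq:orth} globally.

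The solvability and stability of the local bubble problem is exactly where Lemmas \ref{curl:local inf sup} and \ref{div:local inf sup} enter. The local inf-sup condition \eqref{main ingre} (resp.\ \eqref{main ingre div}), together with $\dim \bm{D}_h(K) = \dim \bm{P}_{r-1}(K)$ and the fact that the bilinear form $(\cdot,\cdot)_K$ on $\bm{B}_h^{\mathrm{d}}(K) \times \bm{D}_h(K)$ is, after passing to the reference element, a square-type well-posed pairing — more precisely, the inf-sup bound forces injectivity of $\bm{q}_h \mapsto (\cdot, \bm{q}_h)_K$ restricted appropriately, and one then solves the normal equations — yields a unique minimal-norm solution $\bm{b}|_K$ with the stability estimate
\begin{equation*}
\|\bm{b}\|_{0,K} \le c^{-1} \sup_{\bm{q}_h \in \bm{D}_h(K)} \frac{(\bm{v} - \bm{w}, \bm{q}_h)_K}{\|\bm{q}_h\|_{0,K}} \le c^{-1} \|\pi_h^K(\bm{v} - \bm{w})\|_{0,K} \le c^{-1}\|\bm{v} - i_h\bm{v}\|_{0,K},
\end{equation*}
where $\pi_h^K$ is the local $L^2$ projection onto $\bm{D}_h(K)$. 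Combining this with \eqref{W_h:i_h_appro} gives $\|\bm{b}\|_{0,K} \le C h_K^l \|\bm{v}\|_{l,K}$ for $1 \le l \le r+1$, which is precisely the $L^2$ part of \eqref{eq:appro} for the correction. For the $H^1$ part, I would invoke the inverse inequality (Lemma on inverse inequality) on the polynomial function $\bm{b}|_K$: $|\bm{b}|_{1,K} \le C h_K^{-1}\|\bm{b}\|_{0,K} \le C h_K^{l-1}\|\bm{v}\|_{l,K}$, so $h_K|\bm{b}|_{1,K}$ has the right order. Then \eqref{eq:appro} for $j_h \bm{v} = i_h\bm{v} + \bm{b}$ follows by the triangle inequality from the corresponding bounds for $i_h\bm{v}$ and for $\bm{b}$.

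The main obstacle — really the only subtle point — is verifying that the local problem defining $\bm{b}|_K$ is genuinely solvable and that the stability constant is $h_K$-uniform. One must be careful that the inf-sup condition is stated as $\inf_{\bm{q}_h}\sup_{\bm{v}_h}$, i.e.\ it controls $\|\bm{q}_h\|$ in terms of the pairing, which is what is needed to invert the transpose map and extract $\bm{b}$; since $\bm{B}_h^{\mathrm{d}}(K)$ and $\bm{D}_h(K)$ may have different dimensions, one works with the associated normal equations (or equivalently defines $\bm{b}|_K$ as the $\bm{B}_h^{\mathrm{d}}(K)$-element of minimal norm representing the functional $\bm{q}_h \mapsto (\bm{v}-\bm{w},\bm{q}_h)_K$ on the range), and the $h_K$-uniformity is inherited directly from the scaling-invariant reference-element estimate already proved in Lemmas \ref{curl:local inf sup}–\ref{div:local inf sup}. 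A secondary technical care is ensuring the correction stays conforming in $\bm{V}_h$, which is immediate from the boundary-trace vanishing property of the bubble spaces noted after \eqref{curl bubble} and \eqref{div bubble}: no inter-element coupling is introduced, so $j_h$ is well defined on all of $\bm{H}^{1+\epsilon}(\Omega)$ (the $\epsilon>0$ regularity being needed only so that the canonical interpolant $i_h$ is well defined).
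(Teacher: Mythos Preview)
Your proposal is correct and follows essentially the same route as the paper: define $j_h\bm{v} = i_h\bm{v} + \bm{b}$ with the element-local bubble correction $\bm{b}|_K$ determined by the moment conditions, use the local inf-sup of Lemmas~\ref{curl:local inf sup}--\ref{div:local inf sup} to obtain existence and the stability bound $\|\bm{b}\|_{0,K}\le c^{-1}\|\bm{v}-i_h\bm{v}\|_{0,K}$, and then combine with the inverse inequality and \eqref{W_h:i_h_appro}. Your ``minimal-norm solution'' is exactly the paper's choice of $\bm{b}|_K$ in $N(A_h)^\perp$, where $A_h:\bm{B}_h(K)\to(\bm{D}_h(K))'$ is the pairing operator; the two formulations are equivalent.
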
 

\begin{proof}
It follows from Lemma~\ref{curl:local inf sup} and \ref{div:local inf sup} that $\bm{B}_h(K)$ and $\bm{D}_h(K)$ satisfy a local inf-sup condition  
\begin{equation}
\label{eq:local inf sup}
\inf_{\bm{q}_h \in \bm{D}_h(K)} \sup_{\bm{\bm{v}_h} \in \bm{B}_h(K)} \frac{(\bm{\bm{v}_h}, \bm{q}_h)_K}{\|\bm{\bm{v}_h}\|_{0,K} \|\bm{q}_h\|_{0,K}} \ge c > 0,
\end{equation}
with $c$ independent of $h$. This implies that the operator
$A_h:\bm{B}_h(K) \to (\bm{D}_h(K))^\prime$ defined by
$$\langle A_h \bm{v}_h, \bm{q}_h \rangle := (\bm{v}_h,\bm{q}_h),$$
is an isomorphism from $N(A_h)^\perp$ onto $(\bm{D}_h(K))^\prime$.
Therefore, for any $\bm{v} \in \bm{H}^{1+\epsilon}(\Omega)$, there exists a unique function $m_h(\bm{v}) \in N(A_h)^\perp$ such that
\begin{equation}
	\begin{aligned}
\langle A_h m_h(\bm{v}),\bm{q}_h\rangle &:=(m_h(\bm{v}),\bm{q}_h)_K=(\bm{v}-i_h\bm{v},\bm{q}_h)_K,\\
\label{eq:bdd}
\|m_h(\bm{v})\|_{0,K} & \leq {1\over{c}} \|\bm{v} - i_h \bm{v}\|_{0,K},
	\end{aligned}
\end{equation} 
where $i_h$ is the canonical interpolation from $\bm{H}^{1+\epsilon}(\Omega)$ onto $\bm{W}_h$, introduced in \eqref{W_h:i_h_appro}.

We now define the modified interpolation operator $j_h : \bm{H}^{1+\epsilon}(\Omega) \to \bm{V}_h$ based on  $i_h$:
 $$j_h\bm{v}|_K:=i_h\bm{v}|_K+ m_h(\bm{v}).$$ 
By construction, $j_h \bm{v} \in \bm{V}_h $.  Using the triangle inequality and \eqref{eq:bdd}, we have
\begin{equation}
\begin{aligned}
\|\bm{v} - j_h \bm{v}\|_{0,K}
&\leq \|\bm{v} - i_h \bm{v}\|_{0,K} + \| m_h(\bm{v})\|_{0,K}  \\
&\leq \left(1 + \frac{1}{c}\right) \|\bm{v} - i_h \bm{v}\|_{0,K}.  \label{eq:L2 esti}
\end{aligned}
\end{equation}
Applying the inverse inequality to $m_h(\bm{v}) \in \bm{B}_h(K)$, we obtain
\begin{equation}
  \begin{aligned}
|\bm{v} - j_h \bm{v}|_{1,K} &\leq |\bm{v} - i_h \bm{v}|_{1,K} + |m_h(\bm{v})|_{1,K} \\
&\leq |\bm{v} - i_h \bm{v}|_{1,K} + h^{-1} \|m_h(\bm{v})\|_{0,K}   \\
&\leq |\bm{v} - i_h \bm{v}|_{1,K} + h^{-1} \frac{1}{c} \|\bm{v} - i_h \bm{v}\|_{0,K}.\label{eq:H1 esti}
\end{aligned}  
\end{equation}
Combining \eqref{eq:L2 esti}, \eqref{eq:H1 esti}, and the approximation property \eqref{W_h:i_h_appro} of $i_h$, we conclude the estimate \eqref{eq:appro}. The orthogonality condition \eqref{eq:orth} follows directly from the definition of $m_h(\bm{v})$, which ensures that
$$(\bm{v} - j_h \bm{v}, \bm{q}_h) = (\bm{v} - i_h \bm{v} - m_h(\bm{v}), \bm{q}_h) = 0, \quad \forall \bm{q}_h \in \bm{D}_h,$$thus completing the proof.
\end{proof}

\section{Convergence analysis}\label{sec:estimate}
In this section, we analyze the error of the local projection stabilization scheme~\eqref{scheme} with respect to the energy norm $\|\cdot\|_h$ defined in \eqref{ene norm}.
\subsection{Stability}\label{subsec2}
We begin by proving the coercivity of the discrete bilinear form.
\begin{lemma}[coercivity] \label{lm:coercivity} Assume that condition \eqref{coer:positive eig} holds, and that the stabilization coefficient in $S_h^1(\cdot,\cdot)$ satisfies $c_f\, \bm{\beta} \cdot \bm{n}_f > 0$.  
Then the bilinear form is coercive in the norm $\|\cdot\|_h$, i.e.,
\[
a(\bm{u}_h, \bm{u}_h) + S(\bm{u}_h, \bm{u}_h) \geq \min\left\{\frac{1}{2}\rho_0, 1\right\} \|\bm{u}_h\|^2_h,\quad\forall \bm{u}_h \in \bm{V}_h.
\]  
 \end{lemma}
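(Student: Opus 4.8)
The plan is to establish coercivity by evaluating $a(\bm{u}_h, \bm{u}_h)$ term by term, extracting a full $L^2$-type contribution plus boundary and stabilization contributions that match the energy norm \eqref{ene norm}. First I would address the volume terms $(\gamma \bm{u}_h, \bm{u}_h)_\Omega + \sum_K (\bm{u}_h, \mathcal{L}^*_{\bm\beta} \bm{u}_h)_K$. The idea is to integrate by parts back on each element using \eqref{int-by-parts}, writing $\sum_K (\bm{u}_h, \mathcal{L}^*_{\bm\beta}\bm{u}_h)_K = \sum_K (\mathcal{L}_{\bm\beta}\bm{u}_h, \bm{u}_h)_K - \sum_K \langle \bm{u}_h, \bm{u}_h\rangle_{\partial K, \bm\beta}$, and then symmetrizing: $\sum_K(\bm{u}_h, \mathcal{L}^*_{\bm\beta}\bm{u}_h)_K = \frac12\sum_K\big[(\bm{u}_h,\mathcal{L}^*_{\bm\beta}\bm{u}_h)_K + (\mathcal{L}_{\bm\beta}\bm{u}_h,\bm{u}_h)_K\big] - \frac12\sum_K\langle\bm{u}_h,\bm{u}_h\rangle_{\partial K,\bm\beta}$. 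The symmetric part $\frac12\big[(\bm{u}_h,\mathcal{L}^*_{\bm\beta}\bm{u}_h) + (\mathcal{L}_{\bm\beta}\bm{u}_h,\bm{u}_h)\big]$ is a pointwise quadratic form in $\bm{u}_h$ whose matrix is exactly $\mp(\frac{\nabla\bm\beta + (\nabla\bm\beta)^T}{2} - \frac{\nabla\cdot\bm\beta}{2}I)$ (with the sign depending on $\mathrm{d}$), so combining with $(\gamma\bm{u}_h,\bm{u}_h)_\Omega$ yields the quadratic form with matrix $\gamma I \pm (\dots)$, which by \eqref{coer:positive eig} is bounded below by $\rho_0\|\bm{u}_h\|^2_{L^2(\Omega)}$. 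One has to be a little careful here that the formal-adjoint algebra really produces that symmetric matrix; this is the standard Friedrichs computation and I would verify it once for each operator, or cite the derivation referenced after \eqref{coer:positive eig}.

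Next I would collect all the facet terms. From the integration-by-parts step we get $-\frac12\sum_K\langle\bm{u}_h,\bm{u}_h\rangle_{\partial K,\bm\beta}$, and from $a(\cdot,\cdot)$ directly we have $\sum_{f\in\mathcal{F}^\partial_+}\langle\bm{u}_h,\bm{u}_h\rangle_{f,\bm\beta} + \sum_{f\in\mathcal{F}^\circ}\langle\{\bm{u}_h\}_f,[\bm{u}_h]_f\rangle_{f,\bm\beta}$. Rewriting $\sum_K\langle\bm{u}_h,\bm{u}_h\rangle_{\partial K,\bm\beta}$ as a sum over facets, on an interior facet it contributes $\langle\bm{u}_h^+,\bm{u}_h^+\rangle_{f,\bm\beta} - \langle\bm{u}_h^-,\bm{u}_h^-\rangle_{f,\bm\beta} = \langle[\bm{u}_h]_f,\{\bm{u}_h\}_f\rangle_{f,\bm\beta} + \langle\{\bm{u}_h\}_f,[\bm{u}_h]_f\rangle_{f,\bm\beta} = 2\langle\{\bm{u}_h\}_f,[\bm{u}_h]_f\rangle_{f,\bm\beta}$, so the $-\frac12$ of it exactly cancels the average-jump term $\sum_{\mathcal{F}^\circ}\langle\{\bm{u}_h\}_f,[\bm{u}_h]_f\rangle_{f,\bm\beta}$ coming from $a$. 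On a boundary facet it contributes $\langle\bm{u}_h,\bm{u}_h\rangle_{f,\bm\beta}$, and combining $-\frac12$ of this with the $\sum_{\mathcal{F}^\partial_+}$ term and splitting $\mathcal{F}^\partial = \mathcal{F}^\partial_+ \cup \mathcal{F}^\partial_-$ leaves $\frac12\sum_{\mathcal{F}^\partial_+}\langle\bm{u}_h,\bm{u}_h\rangle_{f,\bm\beta} - \frac12\sum_{\mathcal{F}^\partial_-}\langle\bm{u}_h,\bm{u}_h\rangle_{f,\bm\beta}$. Since $\bm\beta\cdot\bm n \geq 0$ on $\mathcal{F}^\partial_+$ and $<0$ on $\mathcal{F}^\partial_-$, these are exactly the nonnegative boundary terms $\sum_{\mathcal{F}^\partial_+}\|\bm{u}_h\|^2_{f,\frac12\bm\beta} + \sum_{\mathcal{F}^\partial_-}\|\bm{u}_h\|^2_{f,-\frac12\bm\beta}$ in \eqref{ene norm}.

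Finally, for $S_h(\bm{u}_h,\bm{u}_h) = S_h^1 + S_h^2$, the term $S_h^1(\bm{u}_h,\bm{u}_h) = \sum_{\mathcal{F}^\circ}\langle c_f[\bm{u}_h]_f,[\bm{u}_h]_f\rangle_{f,\bm\beta} = \sum_{\mathcal{F}^\circ}\|[\bm{u}_h]_f\|^2_{f,c_f\bm\beta}$ is precisely the interior-jump term in the energy norm, and it is nonnegative by the assumption $c_f\bm\beta\cdot\bm n_f > 0$; likewise $S_h^2(\bm{u}_h,\bm{u}_h) = \sum_K h_K\|\kappa_h(\mathcal{L}^*_{\overline{\bm\beta}}\bm{u}_h)\|^2_{0,K}$ is exactly the last term in \eqref{ene norm}. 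Putting everything together gives $a(\bm{u}_h,\bm{u}_h) + S_h(\bm{u}_h,\bm{u}_h) \geq \rho_0\|\bm{u}_h\|^2_{L^2(\Omega)} + \sum_{\mathcal{F}^\partial_+}\|\bm{u}_h\|^2_{f,\frac12\bm\beta} + \sum_{\mathcal{F}^\partial_-}\|\bm{u}_h\|^2_{f,-\frac12\bm\beta} + \sum_{\mathcal{F}^\circ}\|[\bm{u}_h]_f\|^2_{f,c_f\bm\beta} + \sum_K h_K\|\kappa_h(\mathcal{L}^*_{\overline{\bm\beta}}\bm{u}_h)\|^2_{0,K} \geq \min\{\rho_0, 1\}\|\bm{u}_h\|^2_h$. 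The factor $\frac12$ in the stated bound $\min\{\frac12\rho_0, 1\}$ presumably leaves room in case the $L^2$ coefficient needs halving when one only symmetrizes part of the reaction-advection form, or to absorb a harmless adjustment; I would track the constants carefully and keep whichever clean lower bound the algebra delivers. I expect the main obstacle to be the bookkeeping of the facet terms — making sure every $\langle\cdot,\cdot\rangle_{f,\bm\beta}$ sign and factor lines up so that the interior jump contributions cancel against $a$'s average-jump term and only the $S_h^1$ jump survives — together with confirming the pointwise Friedrichs identity for the symmetric part of the advection operator in both the $\mathrm{curl}$ and $\mathrm{div}$ cases.
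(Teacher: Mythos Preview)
Your proposal is correct and follows essentially the same route as the paper: symmetrize the volume advection term via the integration-by-parts identity \eqref{int-by-parts}, invoke the pointwise Friedrichs identity $\mathcal{L}_{\bm\beta}\bm{u}_h+\mathcal{L}^*_{\bm\beta}\bm{u}_h=\pm[(D\bm\beta)+(D\bm\beta)^T-(\nabla\cdot\bm\beta)I]\bm{u}_h$ together with assumption \eqref{coer:positive eig}, and then collect facet terms so that the interior average--jump contribution cancels and only the boundary and $S_h$ pieces of the energy norm remain. Your facet bookkeeping is in fact spelled out in more detail than the paper's own proof, and your observation about the constant (that the algebra actually delivers $\min\{\rho_0,1\}$ rather than $\min\{\tfrac12\rho_0,1\}$) is well taken; the paper's stated constant is simply not sharp.
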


\begin{proof}
A direct calculation yields the identity
\begin{equation}
  \begin{aligned}
\mathcal{L}_{\bm{\bm{\beta}}} \bm{u}_h + \mathcal{L}^{*}_{\bm{\beta}} \bm{u}_h 
&=\pm[ (D{\bm{\beta}}) \bm{u}_h+ (D{\bm{\beta}})^T \bm{u}_h - (\text{div}\bm{\beta}) \bm{u}_h],\label{eq:2.10}
\end{aligned}  
\end{equation}
where the sign is positive if $\mathrm{d} = \mathrm{curl}$ and negative if $\mathrm{d} = \mathrm{div}$.
It follows form the integration by parts formula \eqref{int-by-parts} that
\begin{equation}
   \begin{aligned}
&a(\bm{u}_h, \bm{u}_h) + S(\bm{u}_h, \bm{u}_h) \\
=~& (\gamma \bm{u}_h, \bm{u}_h)_\Omega + \sum_{K} (\bm{u}_h, \mathcal{L}^*_{\bm\beta} \bm{u}_h)_K + \sum_{f \in \mathcal{F}^{\circ}} \langle \{\bm{u}_h\}, [\bm{u}_h]\rangle_{f,\bm{\beta}}+ \sum_{f \in \mathcal{F}_+^{\partial}} \langle \bm{u}_h, \bm{u}_h\rangle_{f, \bm{\beta}}  + S(\bm{u}_h, \bm{u}_h) \\
=~&(\gamma \bm{u}_h, \bm{u}_h)_\Omega + \frac{1}{2} \sum_{K} (\bm{u}_h, \mathcal{L}_{\bm{\bm{\beta}}} \bm{u}_h + \mathcal{L}^{*}_{\bm{\bm{\beta}}} \bm{u}_h)_K \\
&\quad + \frac{1}{2} \sum_{f \in \mathcal{F}^{\partial}_+} \langle \bm{u}_h, \bm{u}_h\rangle_{f, \bm{\beta}} - \frac{1}{2} \sum_{f \in \mathcal{F}^{\partial}_-} \langle \bm{u}_h, \bm{u}_h\rangle_{f, \bm{\beta}} + S(\bm{u}_h, \bm{u}_h) \\
\geq~& \min\left\{\frac{1}{2} \rho_0, 1\right\} \|\bm{u}_h\|^2_h, \quad \forall \bm{u}_h \in \bm{V}_h.
\end{aligned} 
\end{equation}
This completes the proof. 
\end{proof}

\subsection{Consistency error}
Since the scheme~\eqref{scheme} is non-consistent, quantifying the consistency error is essential for the error analysis.
\begin{lemma}[consistency term]                  
    Let $\bm{u}$ be the solution of \eqref{eq:3d unified} and $\bm{u}_h$ be the solution of
    \eqref{scheme}. Then we have
    \begin{equation}
       \label{eq: consistency}
        a(\bm{u} - \bm{u}_h, \bm{v}_h) + S_h^1(\bm{u} - \bm{u}_h, \bm{v}_h) = S_h^2(\bm{u}_h, \bm{v}_h),\quad\forall \bm{v}_h \in \bm{V}_h.
    \end{equation}
\end{lemma}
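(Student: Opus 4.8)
The identity will come from combining two facts: (i) the exact solution $\bm u$ satisfies the \emph{unstabilized} discrete equation $a(\bm u,\bm v_h)=l(\bm v_h)$ for every $\bm v_h\in\bm V_h$, and (ii) the jump-penalty term annihilates the exact solution, $S_h^1(\bm u,\bm v_h)=0$ for every $\bm v_h\in\bm V_h$. Granting these, I would write the scheme \eqref{scheme} as $a(\bm u_h,\bm v_h)+S_h^1(\bm u_h,\bm v_h)+S_h^2(\bm u_h,\bm v_h)=l(\bm v_h)$, subtract it from $a(\bm u,\bm v_h)=l(\bm v_h)$ to obtain $a(\bm u-\bm u_h,\bm v_h)=S_h^1(\bm u_h,\bm v_h)+S_h^2(\bm u_h,\bm v_h)$, and then invoke (ii) together with bilinearity of $S_h^1$ in the form $S_h^1(\bm u_h,\bm v_h)=S_h^1(\bm u_h-\bm u,\bm v_h)=-S_h^1(\bm u-\bm u_h,\bm v_h)$. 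Moving this term to the left-hand side yields exactly \eqref{eq: consistency}. It is important \emph{not} to claim $S_h^2(\bm u,\bm v_h)=0$: the fluctuation term $\kappa_h(\mathcal L^*_{\overline{\bm\beta}}\bm u)$ generally does not vanish, which is precisely why the scheme is non-consistent and why $S_h^2(\bm u_h,\bm v_h)$ survives on the right.

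\textbf{Step (i).} I would simply retrace the derivation preceding \eqref{scheme}: test \eqref{eq:3d unified} against $\bm v_h\in\bm V_h$ on each $K$, apply the integration-by-parts formula \eqref{int-by-parts}, sum over elements, and regroup the facet contributions. On interior facets the term $\langle[\bm u]_f,\{\bm v_h\}_f\rangle_{f,\bm\beta}$ drops because $\bm u$ can only be non-smooth across characteristic faces ($\bm\beta\cdot\bm n_f=0$), where the $\bm\beta$-weighted pairing vanishes identically; on inflow boundary facets one substitutes the boundary datum $\bm u=\bm g$, producing the term in $l(\cdot)$; on outflow facets one keeps $\langle\bm u,\bm v_h\rangle_{f,\bm\beta}$. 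This gives $a(\bm u,\bm v_h)=l(\bm v_h)$.

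\textbf{Step (ii).} Here one notes that $S_h^1(\bm u,\bm v_h)=\sum_{f\in\mathcal F^\circ}\langle c_f[\bm u]_f,[\bm v_h]_f\rangle_{f,\bm\beta}$, and on every interior facet $f$ exactly one of two situations occurs: either $\bm\beta\cdot\bm n_f=0$, in which case the integrand carries the vanishing weight $c_f\,\bm\beta\cdot\bm n_f$; or $\bm\beta\cdot\bm n_f\neq0$, in which case $\bm u$ is smooth across $f$ so $[\bm u]_f=0$. In both cases the facet term vanishes.

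\textbf{Main obstacle.} There is no deep difficulty here; the only point requiring care is the regularity bookkeeping in Step (i): one must assume $\bm u$ is regular enough (e.g.\ $\bm u\in\bm H^{1+\epsilon}(\Omega)$ with the PDE holding in $\bm L^2(\Omega)$) for the facet traces and the volume term $\sum_K(\bm u,\mathcal L^*_{\bm\beta}\bm v_h)_K$ to be well defined, and one must keep the exact velocity $\bm\beta$ in $a(\cdot,\cdot)$ and $S_h^1(\cdot,\cdot)$ while the frozen coefficient $\overline{\bm\beta}$ enters $S_h^2(\cdot,\cdot)$, so that no spurious cancellation is introduced. Once these are in place, the proof is a two-line rearrangement.
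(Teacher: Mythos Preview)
Your proposal is correct and follows essentially the same argument as the paper: establish $a(\bm u,\bm v_h)+S_h^1(\bm u,\bm v_h)=l(\bm v_h)$ from the derivation in Section~\ref{sec:method} together with $S_h^1(\bm u,\bm v_h)=0$, then subtract the discrete scheme \eqref{scheme}. Your write-up is in fact more explicit than the paper's two-line proof, and your caution that $S_h^2(\bm u,\bm v_h)$ need not vanish is exactly the point.
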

\begin{proof}
Based on the formulation derived in Section~\ref{sec:method}, and noting that $S_h^1(\bm{u}, \bm{v}_h) = 0$, we obtain
\begin{equation}
\label{111}
a(\bm{u}, \bm{v}_h) + S_h^1(\bm{u}, \bm{v}_h) = l(\bm{v}_h).
\end{equation}
Subtracting \eqref{111} from the numerical scheme \eqref{scheme} yields the consistency relation \eqref{eq: consistency}.
\end{proof}
The following lemma provides an estimate for the consistency error.
\begin{lemma}[consistency error]
    Assume 
    $\bm{u} \in \bm{H}^{r+1}(K)$ for all $K\in\mathcal{T}_h$.  
    Then
\begin{equation}
    \label{eq: consistency err}
     S_h^2(\bm{u}, \bm{v}_h) \leq C \| \bm{v}_h \|_h (\sum_{K \in \mathcal{T}_h} h_K^{2r+1} \|\bm{\beta} \|_{0,\infty,K}^2 \| \bm{u} \|_{r+1,K}^2)^{\frac{1}{2}},\quad\forall \bm{v}_h \in \bm{V}_h.
\end{equation}
\end{lemma}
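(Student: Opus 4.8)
The plan is to split $S_h^2(\bm{u},\bm{v}_h)$ by an elementwise Cauchy--Schwarz inequality, absorb the factor carrying $\bm{v}_h$ directly into the energy norm $\|\cdot\|_h$, and estimate the factor carrying $\bm{u}$ via the local approximation property \eqref{D_h appro} of the fluctuation operator $\kappa_h$.

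First I would recall the definition of $S_h^2$ and apply the Cauchy--Schwarz inequality on each $K$ together with the discrete Cauchy--Schwarz inequality over the elements to get
\[
S_h^2(\bm{u},\bm{v}_h) \le \Big(\sum_{K\in\mathcal{T}_h} h_K \|\kappa_h(\mathcal{L}^*_{\overline{\bm{\beta}}}\bm{u})\|_{0,K}^2\Big)^{1/2}\Big(\sum_{K\in\mathcal{T}_h} h_K \|\kappa_h(\mathcal{L}^*_{\overline{\bm{\beta}}}\bm{v}_h)\|_{0,K}^2\Big)^{1/2}.
\]
By the definition of the energy norm \eqref{ene norm}, the last factor on the right is bounded by $\|\bm{v}_h\|_h$, so it remains only to control the first factor.

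Next, fix an element $K$. Since $\overline{\bm{\beta}}$ is constant on $K$, the operator $\mathcal{L}^*_{\overline{\bm{\beta}}}$ is a \emph{first-order} differential operator in its argument whose coefficients are (components of) $\overline{\bm{\beta}}$ only; in particular $\mathcal{L}^*_{\overline{\bm{\beta}}}\bm{u}\in\bm{H}^r(K)$ whenever $\bm{u}\in\bm{H}^{r+1}(K)$, and
\[
|\mathcal{L}^*_{\overline{\bm{\beta}}}\bm{u}|_{r,K} \le C\,\|\overline{\bm{\beta}}\|_{0,\infty,K}\,|\bm{u}|_{r+1,K} \le C\,\|\bm{\beta}\|_{0,\infty,K}\,\|\bm{u}\|_{r+1,K},
\]
using the bound $\|\overline{\bm{\beta}}\|_{0,\infty,K}\le\|\bm{\beta}\|_{0,\infty,K}$. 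Applying \eqref{D_h appro} with $l=r$ to $\bm{q}=\mathcal{L}^*_{\overline{\bm{\beta}}}\bm{u}$ then gives
\[
\|\kappa_h(\mathcal{L}^*_{\overline{\bm{\beta}}}\bm{u})\|_{0,K} \le C\,h_K^r\,|\mathcal{L}^*_{\overline{\bm{\beta}}}\bm{u}|_{r,K} \le C\,h_K^r\,\|\bm{\beta}\|_{0,\infty,K}\,\|\bm{u}\|_{r+1,K}.
\]

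Finally I would substitute this into the first factor, obtaining
\[
\sum_{K\in\mathcal{T}_h} h_K \|\kappa_h(\mathcal{L}^*_{\overline{\bm{\beta}}}\bm{u})\|_{0,K}^2 \le C\sum_{K\in\mathcal{T}_h} h_K^{2r+1}\,\|\bm{\beta}\|_{0,\infty,K}^2\,\|\bm{u}\|_{r+1,K}^2,
\]
and combine with the bound $\|\bm{v}_h\|_h$ for the second factor to conclude \eqref{eq: consistency err}. No serious obstacle is expected; the only point needing care is the observation that, because $\overline{\bm{\beta}}$ is piecewise constant, $\mathcal{L}^*_{\overline{\bm{\beta}}}$ contains no derivatives of the velocity field, so all $r$ derivatives appearing in \eqref{D_h appro} land on $\bm{u}$ and produce the $(r{+}1)$-st order norm of $\bm{u}$ with a clean $\|\bm{\beta}\|_{0,\infty,K}$ prefactor --- which is exactly the reason the approximation $\overline{\bm{\beta}}$, rather than $\bm{\beta}$, is used in $S_h^2$.
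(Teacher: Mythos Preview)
Your proposal is correct and follows essentially the same approach as the paper: split $S_h^2(\bm{u},\bm{v}_h)$ via Cauchy--Schwarz into a $\bm{u}$-factor and a $\bm{v}_h$-factor, bound the $\bm{v}_h$-factor by $\|\bm{v}_h\|_h$ using the definition of the energy norm, and control the $\bm{u}$-factor via the approximation property \eqref{D_h appro} of $\kappa_h$ together with the fact that $\overline{\bm{\beta}}$ is piecewise constant. Your closing remark about why $\overline{\bm{\beta}}$ rather than $\bm{\beta}$ appears in $S_h^2$ matches exactly the point the paper makes in the remark following the lemma.
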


\begin{proof}
	Apply the approximation property of $\kappa_h$, we have 
 \begin{equation}
    \begin{aligned}
S_h^2(\bm{u}, \bm{u}) &= \sum_{K \in \mathcal{T}_h} h_K \|\kappa_h(\mathcal{L}^{*}_{\overline{\bm{\beta}}} \bm{u})\|^2_{0,K} 
\leq C \sum_{K \in \mathcal{T}_h}h_K h_K^{2r} 
\|\mathcal{L}^{*}_{\overline{\bm{\beta}}} \bm{u}\|^{2}_{r,K}  \\
&\leq C \sum_{K \in \mathcal{T}_h} h_K^{2r+1} \| \overline{\bm{\beta}} \|_{0,\infty,K}^2 \| \bm{u} \|_{r+1,K}^2. \\
\end{aligned}   
 \end{equation}
Therefore, it follows from Cauchy-Schwartz inequality that
\begin{equation}
    \begin{aligned}
S_h^2(\bm{u}, \bm{v}_h) &\leq S_h^2(\bm{u}, \bm{u})^{1/2}S_h^2(\bm{v}_h, \bm{v}_h)^{1/2}\leq S_h^2(\bm{u}, \bm{u})^{1/2}\| \bm{v}_h \|_h  \\
&\leq C \| \bm{v}_h \|_h (\sum_{K \in \mathcal{T}_h} h_K^{2r+1} \| \overline{\bm{\beta}} \|_{0,\infty,K}^2 \| \bm{u} \|_{r+1,K}^2)^{\frac{1}{2}}\\
&\leq C \| \bm{v}_h \|_h (\sum_{K \in \mathcal{T}_h} h_K^{2r+1} \|\bm{\beta} \|_{0,\infty,K}^2 \| \bm{u} \|_{r+1,K}^2)^{\frac{1}{2}},\quad\forall \bm{v}_h \in \bm{V}_h,
\end{aligned}   
\end{equation}
which proves \eqref{eq: consistency err}.
\end{proof}

\begin{remark}
	If $\bm{\beta}$ were used instead of $\overline{\bm{\beta}}$ in defining $S_h^2(\cdot,\cdot)$, the estimate would involve $\|\bm{\beta}\|_{r+1,\infty,K}$. To avoid requiring high regularity on $\bm{\beta}$, we therefore define $S_h^2(\cdot,\cdot)$ using the piecewise constant approximation $\overline{\bm{\beta}}$.
\end{remark}

\subsection{{A priori} error estimate}
We now derive the \emph{a priori} error estimate for the LPS scheme~\eqref{scheme}. In practice, we are particularly interested in the global solution behavior on quasi-uniform meshes, even in the presence of boundary or internal layers. Thus, in the following analysis, we only consider the case where the mesh $\mathcal{T}_h$ is quasi-uniform.

\begin{theorem}[error estimate]
\label{thm:convergence order}
Let ${\mathcal{T}_h}$ be quasi-uniform, and let $\bm{V}_h = \bm{W}_h + \bm{B}_h$ be defined as in \eqref{def: fem space}. Suppose that assumptions~\eqref{cf positive} and~\eqref{coer:positive eig} hold.  
If the exact solution $\bm{u}$ belongs to $\bm{H}^{r+1}(\Omega)$, then the numerical solution $\bm{u}_h$ of the LPS scheme~\eqref{scheme} satisfies
\begin{equation}
\label{err estimate}
 \| \bm{u} - \bm{u}_h \|_h \leq C h^{r + \frac{1}{2}} \| \bm{u} \|_{H^{r+1}(\Omega)}.
\end{equation}
\end{theorem}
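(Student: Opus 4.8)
The plan is to follow the standard Strang-type argument for non-consistent stabilized methods, splitting the error via the modified interpolation operator $j_h$ from Theorem~\ref{thm: existence of j_h}. First I would write $\bm{u} - \bm{u}_h = (\bm{u} - j_h\bm{u}) + (j_h\bm{u} - \bm{u}_h) =: \bm\eta + \bm\xi_h$, where $\bm\xi_h \in \bm{V}_h$. The term $\|\bm\eta\|_h$ is handled directly by the approximation property \eqref{eq:appro}: each piece of the energy norm \eqref{ene norm} is estimated term-by-term, using \eqref{eq:appro} for the $L^2$-volume part, the trace inequality together with \eqref{eq:appro} for the boundary and jump terms (noting $[\bm{\eta}]_f$ contributions, which require that $\bm{u}$ itself is continuous so the jump of $j_h\bm u$ is controlled by the interpolation error on the two adjacent elements), and for the LPS volume term $\sum_K h_K\|\kappa_h(\mathcal{L}^*_{\overline{\bm\beta}}\bm\eta)\|_{0,K}^2$ one uses boundedness of $\kappa_h$, the product rule to bound $\mathcal{L}^*_{\overline{\bm\beta}}\bm\eta$ by $\|\bm\eta\|_{1,K}$ (since $\overline{\bm\beta}$ is piecewise constant), the inverse inequality on the bubble part of $\bm\eta$, and \eqref{eq:appro}; this yields $\|\bm\eta\|_h \le C h^{r+\frac12}\|\bm u\|_{r+1,\Omega}$, where the half power comes from the $\sqrt{h_K}$ weights in the boundary/jump/LPS terms against the $h^{r+1}$ (resp.\ $h^r$ for derivatives) interpolation rates.

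Next I would estimate $\|\bm\xi_h\|_h$ using coercivity (Lemma~\ref{lm:coercivity}):
\[
\min\{\tfrac12\rho_0,1\}\|\bm\xi_h\|_h^2 \le a(\bm\xi_h,\bm\xi_h) + S_h(\bm\xi_h,\bm\xi_h).
\]
Writing $\bm\xi_h = (j_h\bm u - \bm u) + (\bm u - \bm u_h) = -\bm\eta + (\bm u - \bm u_h)$ and invoking the consistency relation \eqref{eq: consistency}, namely $a(\bm u - \bm u_h,\bm\xi_h) + S_h^1(\bm u - \bm u_h,\bm\xi_h) = S_h^2(\bm u_h,\bm\xi_h)$, gives
\[
a(\bm\xi_h,\bm\xi_h) + S_h^1(\bm\xi_h,\bm\xi_h) = -\big(a(\bm\eta,\bm\xi_h) + S_h^1(\bm\eta,\bm\xi_h)\big) + S_h^2(\bm u_h,\bm\xi_h).
\]
Adding $S_h^2(\bm\xi_h,\bm\xi_h)$ to both sides and writing $S_h^2(\bm u_h,\bm\xi_h) = S_h^2(\bm u_h - \bm u,\bm\xi_h) + S_h^2(\bm u,\bm\xi_h) = -S_h^2(\bm\xi_h,\bm\xi_h) + S_h^2(\bm\eta,\bm\xi_h) + S_h^2(\bm u,\bm\xi_h)$, one arrives at
\[
a(\bm\xi_h,\bm\xi_h) + S_h(\bm\xi_h,\bm\xi_h) = -a(\bm\eta,\bm\xi_h) - S_h^1(\bm\eta,\bm\xi_h) + S_h^2(\bm\eta,\bm\xi_h) + S_h^2(\bm u,\bm\xi_h).
\]
The term $S_h^2(\bm u,\bm\xi_h)$ is bounded by $C\|\bm\xi_h\|_h\, h^{r+\frac12}\|\bm u\|_{r+1,\Omega}$ directly from the consistency error estimate \eqref{eq: consistency err} (using quasi-uniformity to pull out the uniform $h$), and $S_h^1(\bm\eta,\bm\xi_h)$, $S_h^2(\bm\eta,\bm\xi_h)$ are bounded by $\|\bm\eta\|_h\|\bm\xi_h\|_h$ via Cauchy--Schwarz on these positive semidefinite forms, hence by $Ch^{r+\frac12}\|\bm u\|_{r+1,\Omega}\|\bm\xi_h\|_h$.

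The remaining and most delicate piece is bounding $a(\bm\eta,\bm\xi_h)$. The reaction and boundary terms are immediate by Cauchy--Schwarz against the energy norm. The troublesome contribution is $\sum_K(\bm\eta,\mathcal{L}^*_{\bm\beta}\bm\xi_h)_K$: here $\mathcal{L}^*_{\bm\beta}\bm\xi_h$ contains first derivatives of $\bm\xi_h$ which are \emph{not} directly controlled by $\|\bm\xi_h\|_h$. The standard LPS trick is to insert the projection: $(\bm\eta,\mathcal{L}^*_{\overline{\bm\beta}}\bm\xi_h)_K = (\bm\eta,\pi_h\mathcal{L}^*_{\overline{\bm\beta}}\bm\xi_h)_K + (\bm\eta,\kappa_h\mathcal{L}^*_{\overline{\bm\beta}}\bm\xi_h)_K$. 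The second summand is controlled by $h_K^{-1/2}\|\bm\eta\|_{0,K}\cdot h_K^{1/2}\|\kappa_h\mathcal{L}^*_{\overline{\bm\beta}}\bm\xi_h\|_{0,K} \le C h^r\|\bm u\|_{r+1,K}\cdot(\text{LPS part of }\|\bm\xi_h\|_h)$ — \emph{this is precisely where the orthogonality-enriched interpolant would be used}: by \eqref{eq:orth}, $(\bm\eta,\pi_h\mathcal{L}^*_{\overline{\bm\beta}}\bm\xi_h)_K$ would vanish were it not for the $\bm\beta$ vs $\overline{\bm\beta}$ discrepancy, so the first summand reduces to $(\bm\eta,\pi_h(\mathcal{L}^*_{\overline{\bm\beta}} - \mathcal{L}^*_{\bm\beta})\bm\xi_h)_K$, whose operator difference carries a factor $h_K|\bm\beta|_{1,\infty,K}$ from $\|\overline{\bm\beta}-\bm\beta\|_{0,\infty,K}$, compensating the uncontrolled derivative via the inverse inequality: $\|\pi_h(\cdots)\bm\xi_h\|_{0,K} \le C h_K|\bm\beta|_{1,\infty}\, h_K^{-1}\|\bm\xi_h\|_{0,K} = C|\bm\beta|_{1,\infty}\|\bm\xi_h\|_{0,K}$. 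Combining everything and absorbing $\|\bm\xi_h\|_h$, one obtains $\|\bm\xi_h\|_h \le Ch^{r+\frac12}\|\bm u\|_{r+1,\Omega}$, and the triangle inequality $\|\bm u - \bm u_h\|_h \le \|\bm\eta\|_h + \|\bm\xi_h\|_h$ finishes the proof. The main obstacle is the careful bookkeeping of the $\bm\beta$-vs-$\overline{\bm\beta}$ substitution in the $a(\bm\eta,\bm\xi_h)$ term together with the correct use of orthogonality, as this is what makes the non-residual LPS stabilization suffice without controlling full gradients in the energy norm.
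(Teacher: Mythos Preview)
Your proposal is correct and matches the paper's approach: the same $j_h$-based splitting, interpolation bound, coercivity/consistency argument, and the same handling of $\sum_K(\bm\eta,\mathcal{L}^*_{\bm\beta}\bm\xi_h)_K$ by first replacing $\bm\beta\to\overline{\bm\beta}$ (the remainder controlled via $\|\bm\beta-\overline{\bm\beta}\|_{0,\infty,K}\le Ch_K$ and an inverse inequality) and then using the orthogonality \eqref{eq:orth} to reduce $(\bm\eta,\mathcal{L}^*_{\overline{\bm\beta}}\bm\xi_h)_K$ to $(\bm\eta,\kappa_h\mathcal{L}^*_{\overline{\bm\beta}}\bm\xi_h)_K$. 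Note only that your description of the $\pi_h$-part is slightly tangled---$(\bm\eta,\pi_h\mathcal{L}^*_{\overline{\bm\beta}}\bm\xi_h)_K$ vanishes \emph{exactly} by \eqref{eq:orth}, and the $\bm\beta$-vs-$\overline{\bm\beta}$ discrepancy is a separate additive term rather than a residual inside $\pi_h$---and there is a harmless sign slip in your $S_h^2$ bookkeeping ($\bm u_h-\bm u=-\bm\eta-\bm\xi_h$, so the $S_h^2(\bm\eta,\bm\xi_h)$ term appears with a minus sign).
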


\begin{proof}
We begin by decomposing the total error into an interpolation error and a projection error in the standard manner
\begin{equation}
\| \bm{u} - \bm{u}_h \|_h \leq \| \bm{u} - j_h \bm{u} \|_h + \| \bm{u}_h - j_h \bm{u} \|_h.
\end{equation}
For the interpolation error,
we have
\begin{equation}
    \begin{aligned}
&\| \bm{u} - j_h \bm{u} \|_h^2 := \| \bm{u} - j_h \bm{u} \|^2_0  \\
&+ \underbrace{\sum_{f \in \mathcal{F}_{+}^{\partial}} \| \bm{u} - j_h \bm{u} \|^2_{f, \frac{1}{2} \bm{\bm{\beta}}} + \sum_{f \in \mathcal{F}^{\partial}_{-}} \| \bm{u} - j_h \bm{u} \|^2_{f, -\frac{1}{2} \bm{\bm{\beta}}} + \sum_{f \in \mathcal{F}^{\circ}} \| [\bm{u} - j_h \bm{u}]_f \|^2_{f, c_f \bm{\bm{\beta}}}}_{A_1}  \\
&+ \underbrace
{
\sum_{K \in \mathcal{T}_h} h_K \|\kappa_h (\mathcal{L}^{*}_{\overline{\bm{\beta}}} (\bm{u} - j_h \bm{u}))\|^2_{0,K}
}_{A_2}. \\
\end{aligned}
\end{equation}
Using the approximation property \eqref{eq:appro} of $j_h$, the trace inequality and the $L^2$ stability of $\kappa_h$, 
we deduce that
$\| \bm{u} - j_h \bm{u} \|_0 \leq C h^{r+1} |\bm{u}|_{r+1}$ and
\begin{equation}
\begin{aligned}
A_1
&\leq C (h^{-1} \| \bm{u} - j_h \bm{u} \|_{0,\Omega}^2 + h |\bm{u} - j_h \bm{u}|_{1,\Omega}^2) 
\leq C h^{2r+1} |\bm{u}|_{r+1,\Omega}^2,  \\
A_2
&\leq C\sum_{K \in \mathcal{T}_h} h_K \| \mathcal{L}^{*}_{\overline{\bm{\beta}}} (\bm{u} - j_h \bm{u}) \|_{0,K}^2 \\
&\leq \sum_{K \in \mathcal{T}_h} h_K \| \overline{\bm{\beta}} \|_{0,\infty,K}^2 \| \bm{u} - j_h \bm{u}\|_{1,K}^2 
\leq \sum_{K \in \mathcal{T}_h} h_K h_K^{2r} \| \overline{\bm{\beta}} \|_{0,\infty,K}^2 \| \bm{u} \|_{r+1,K}^2. \\
\end{aligned}
\end{equation}
Hence, we obtain the interpolation estimate
\begin{equation}
\label{err:appro}
\| \bm{u} - j_h \bm{u} \|_h \leq C (h^{r+1} + h^{r+1/2}) \|\bm{u}\|_{r+1,\Omega}\leq C h^{r+1/2} \|\bm{u}\|_{r+1,\Omega}.
\end{equation}
For the projection error, we apply Lemma \ref{lm:coercivity} (coercivity) to derive
\begin{align}
\| \bm{u}_h - j_h \bm{u} \|_h^2 \leq ~& C \Big[ a(\bm{u}_h - j_h \bm{u}, \bm{u}_h - j_h \bm{u}) + S_h^1(\bm{u}_h - j_h \bm{u}, \bm{u}_h - j_h \bm{u}) + S_h^2(\bm{u}_h - j_h \bm{u}, \bm{u}_h - j_h \bm{u}) \Big] \notag \\
=~& C \Big[\underbrace{a(\bm{u}_h - \bm{u}, \bm{u}_h - j_h \bm{u}) + S_h^1(\bm{u}_h - \bm{u}, \bm{u}_h - j_h \bm{u}) + S_h^2(\bm{u}_h - \bm{u}, \bm{u}_h - j_h \bm{u})}_{I_1} \notag \\
~&+ \underbrace{a(\bm{u} - j_h \bm{u}, \bm{u}_h - j_h \bm{u})}_{I_{2,a}} + \underbrace{S_h^1(\bm{u} - j_h \bm{u}, \bm{u}_h - j_h \bm{u}) + S_h^2(\bm{u} - j_h \bm{u}, \bm{u}_h - j_h \bm{u})}_{I_{2,S}}\Big]. \label{coer+garlerkin}
\end{align}

\underline{Estimate of $I_1$}: Using consistency error estimate \eqref{eq: consistency} and \eqref{eq: consistency err}, we obtain
\begin{align*}
|I_1| &= |-S_h^2(\bm{u}_h, \bm{u}_h - j_h \bm{u}) + S_h^2(\bm{u}_h - \bm{u}, \bm{u}_h - j_h \bm{u})| = |-S_h^2(\bm{u}, \bm{u}_h - j_h \bm{u})| \notag \\
&\leq C \| \bm{u}_h - j_h \bm{u} \|_h  \Big(\sum_{K \in \mathcal{T}_h} h_K^{2r+1}  \| {\bm{\beta}} \|^2_{0,\infty,K} \| \bm{u} \|_{r+1,K}^2\Big)^{1/2}
\leq C \| \bm{u}_h - j_h \bm{u} \|_h h^{r+1/2} \| \bm{u} \|_{r+1,\Omega}.  \label{consis err}
\end{align*}

\underline{Estimate of $I_{2,S}$}: The estimate follows directly from the Cauchy–Schwarz inequality and the interpolation estimate \eqref{err:appro}, namely,
$$
\begin{aligned}
S_h^1(\bm{u} - j_h \bm{u}, \bm{u}_h - j_h \bm{u})  +S_h^2(\bm{u} - j_h \bm{u},\bm{u}_h - j_h \bm{u}) & \leq C\|\bm{u} - j_h \bm{u}\|_h  \|\bm{u}_h - j_h \bm{u}\|_h \\
& \leq C h^{r+1/2} \|\bm{u}\|_{r+1,\Omega} \|\bm{u}_h - j_h \bm{u}\|_h.
\end{aligned}
$$

\underline{Estimate of $I_{2,a}$}: We write
\begin{equation*}
    \begin{aligned}
  I_{2,a}
  =~& (\gamma (\bm{u} - j_h \bm{u}), \bm{u}_h - j_h \bm{u})_\Omega   + \sum_{K \in \mathcal{T}_h} (\bm{u} - j_h \bm{u}, \mathcal{L}^{*}_{\bm{\beta}} (\bm{u}_h - j_h \bm{u}))_K \\
&+ \sum_{f \in \mathcal{F}^{\partial}_+} \left\langle \bm{u} - j_h \bm{u}, \bm{u}_h - j_h \bm{u}\right\rangle_{f, \bm{\bm{\beta}}}   + \sum_{f \in \mathcal{F}^{\circ}} \left\langle \{ \bm{u} - j_h \bm{u} \}_f , [\bm{u}_h - j_h \bm{u}]_f \right\rangle_{f, \bm{\bm{\beta}}} .                  \\
\end{aligned}
\end{equation*}
Applying the Cauchy–Schwarz inequality and trace inequality yields the following estimates:
\begin{equation}
\label{eq:I2-1}
   \begin{aligned}
   (\gamma (\bm{u} - j_h \bm{u}), \bm{u}_h - j_h \bm{u})_\Omega 
\leq ~& C \|\bm{u} - j_h \bm{u} \|_{0,\Omega} \| \bm{u}_h - j_h \bm{u}\|_h,\\
\sum_{f \in \mathcal{F}^{\partial}_+} \left\langle \bm{u} - j_h \bm{u}, \bm{u}_h - j_h \bm{u}\right\rangle_{f, \bm{\bm{\beta}}} 
\leq ~& C \sum_{f \in \mathcal{F}^{\partial}_+}\| \bm{u} - j_h \bm{u}\|_{0,f} \|\bm{u}_h - j_h \bm{u} \|_{f,\frac12\bm{\beta}} \\
\leq ~& C A_1^{\frac12} \|\bm{u}_h - j_h \bm{u} \|_h.
\end{aligned} 
\end{equation}
To estimate the error induced by the convection term across the element interfaces, we employ assumption \eqref{cf positive} together with $c_f = \mathcal{O}(1)$, and obtain:
\begin{equation}
\label{eq:I2-2}
\begin{aligned}
\sum_{f \in \mathcal{F}^{\circ}} \left\langle \{ \bm{u} - j_h \bm{u} \}_f , [\bm{u}_h - j_h \bm{u}]_f \right\rangle_{f, \bm{\bm{\beta}}}  
\leq ~& C \sum_{f \in \mathcal{F}^{\circ}}
\|\bm{u} - j_h \bm{u}\|_{0,f} \|[\bm{u}_h - j_h \bm{u}]\|_{f, c_f\bm{\beta}}\\
\leq ~& C A_1^{\frac12} \|\bm{u}_h - j_h \bm{u}\|_h.
\end{aligned}
\end{equation}
We now estimate the advection term within the elements. Using the approximation property of $\overline{\bm{\beta}}$ and the inverse inequality, we deduce that
\begin{equation}
\label{eq:I2-3}
\begin{aligned}
&|\sum_{K \in \mathcal{T}_h} (\bm{u} - j_h \bm{u}, 
\mathcal{L}^{*}_{\bm{\bm{\beta}}} (\bm{u}_h - j_h \bm{u})-\mathcal{L}^{*}_{\overline{\bm{\beta}}} (\bm{u}_h - j_h \bm{u}))_K|   \\
\leq & \sum_{K \in \mathcal{T}_h} \| \bm{u} - j_h \bm{u} \|_{0,K} (|\bm{u}_h-j_h\bm{u}|_{1,K} \|\bm{\beta}-{\overline{\bm{\beta}}}\|_{0,\infty,K}  + \|\bm{u}_h - j_h \bm{u}\|_{0,K} |\bm{\beta}-{\overline{\bm{\beta}}}|_{1,\infty,K})   \\
\leq&  \sum_{K \in \mathcal{T}_h} \| \bm{u} - j_h \bm{u} \|_{0,K} \|\bm{\beta}\|_{1,\infty,K} (h|\bm{u}_h-j_h\bm{u}|_{1,K}  + \|\bm{u}_h - j_h \bm{u}\|_{0,K})  \\
\leq&  \sum_{K \in \mathcal{T}_h} \| \bm{u} - j_h \bm{u} \|_{0,K} \|\bm{\beta}\|_{1,\infty,K} \|\bm{u}_h - j_h \bm{u}\|_{0,K}
\leq C \| \bm{u} - j_h \bm{u} \|_0 \| \bm{u}_h - j_h \bm{u} \|_h.   
\end{aligned}
\end{equation}
Moreover, the orthogonality property of $j_h$ established in \eqref{eq:orth} implies that
\begin{equation} \label{eq:I2-4}
   \begin{aligned}
|\sum_{K \in \mathcal{T}_h} (\bm{u} - j_h \bm{u}, \mathcal{L}^{*}_{\overline{\bm{\beta}}} (\bm{u}_h - j_h \bm{u}))_K | 
=&~|\sum_{K \in \mathcal{T}_h} (\bm{u} - j_h \bm{u}, \kappa_h (\mathcal{L}^{*}_{\overline{\bm{\beta}}} (\bm{u}_h - j_h \bm{u})))_K |  \\
=&~|\sum_{K \in \mathcal{T}_h} (h_K^{-\frac12} (\bm{u} - j_h \bm{u}), h_K^{\frac12} \kappa_h (\mathcal{L}^{*}_{\overline{\bm{\beta}}} (\bm{u}_h - j_h \bm{u})))_K |  \\
\leq&~ C h^{-\frac12} \| \bm{u} - j_h \bm{u} \|_{0,\Omega} \| \bm{u}_h - j_h \bm{u} \|_h.
\end{aligned} 
\end{equation}
Combining the estimates \eqref{eq:I2-1} -- \eqref{eq:I2-4}, we obtain
\begin{align}
|I_{2,a}| \leq C (h^{-\frac12} \| \bm{u} - j_h \bm{u} \|_0 + A_1^{\frac12}) \| \bm{u}_h - j_h \bm{u} \|_h \leq Ch^{r+\frac12} \|\bm{u}\|_{r+1,\Omega} \| \bm{u}_h - j_h \bm{u} \|_h . \label{err I2}
\end{align}
Therefore, it follows from the above estimates of $I_1$, $I_{2,S}$ and $I_{2,a}$ that
\begin{equation}
\label{err:u_h-j_hu}
\| \bm{u}_h - j_h \bm{u} \|_h \leq C h^{r+1/2} \| \bm{u} \|_{r+1}.
\end{equation}
Finally, combining \eqref{err:appro} and \eqref{err:u_h-j_hu}, we obtain the error estimate \eqref{err estimate}.
\end{proof}

\begin{remark}
	The proof of Theorem~\ref{thm:convergence order} highlights the distinct roles of the two stabilization terms, $S^1_h(\cdot,\cdot)$ and $S^2_h(\cdot,\cdot)$. Specifically, $S^1_h(\cdot,\cdot)$ primarily controls the contribution across the element interfaces, while $S^2_h(\cdot,\cdot)$ is crucial for handling the advection term within the elements, where the modified interpolation operator $j_h$ plays a central role. Their combined effect guarantees the optimal convergence rate of the method. Numerical experiments further confirm that both stabilization components are essential.
\end{remark}

\section{Numerical experiments}\label{sec:numerical}
In this section, we present numerical experiments for both two-dimensional and three-dimensional problems to validate our theoretical findings. All computations are carried out on uniform meshes with varying mesh sizes. The numerical results not only confirm the convergence rate predicted in Theorem~\ref{thm:convergence order}, but also underscore the essential role of bubble enrichment and the stabilization terms $S_h^1(\cdot,\cdot)$ and $S_h^2(\cdot,\cdot)$ in achieving optimal convergence. In addition, we include test cases that demonstrate the robustness of the method in effectively suppressing spurious oscillations in solutions exhibiting layers.

In cases with translational symmetry, the three-dimensional formulations \eqref{eq:3d h curl} and \eqref{eq:3d h div} reduce to the following two-dimensional boundary value problems posed on a domain $\Omega \subset \mathbb{R}^2$:
\begin{equation}
\label{2d curl eqn}
						\left\{
						\begin{aligned}
		                -\bm{R}\bm{\bm{\beta}} (\nabla \cdot \bm{R}\bm{u})		
							 + \nabla (\bm{\bm{\beta}} \cdot \bm{u}) +  \gamma
							\bm{u}&= \bm{f} ~~\quad &&\text{in }\Omega, \\
							\bm{u} 
							&= \bm{g} \quad &&\text{on } \Gamma_{-}, \\
							\end{aligned}
						\right.
\end{equation} 
 \begin{equation}
              \label{2d div eqn}
						\left\{
						\begin{aligned}
		                \bm{\beta} (\nabla \cdot \bm{u}) + \bm{R}\nabla (\bm{u} \times  \bm{\beta})+  \gamma
							\bm{u}&= \bm{f} ~~\quad &&\text{in }\Omega, \\
							\bm{u} 
							&= \bm{g} \quad &&\text{on } \Gamma_{-}, \\
							\end{aligned}
						\right.
\end{equation}
where \( \bm{R} = \begin{bmatrix} 0 & 1 \\ -1 & 0 \end{bmatrix} \) is the ${\pi}\over{2}$-rotation matrix and the two-dimensional cross product is defined as
$\bm{u} \times  \bm{\beta}:=\det([\bm{u} ~  \bm{\beta}])$. 

Equations~\eqref{2d curl eqn} and~\eqref{2d div eqn} are, in fact, equivalent up to a simple transformation. In the two-dimensional setting, let ${\lambda}_0$, ${\lambda}_1$, and ${\lambda}_2$ denote the barycentric coordinates associated with the three vertices of an element $K$. We define
\begin{equation}
    \label{2d curl bubble}
    \bm{B}_h^{\mathrm{curl}}(K):=P_{r-1}(K)\bm{b}_1^{\rm curl}+P_{r-1}(K) \bm{b}_2^{\rm curl}, \quad\forall K \in \mathcal{T}_h,
\end{equation}
where ${\bm{b}}_1^{\rm curl} := {\lambda}_2 {\lambda}_0{\bm{n}}_1$, 
${\bm{b}}_2^{\rm curl} := {\lambda}_0 {\lambda}_1 {\bm{n}}_2$.

Similarly, for the divergence case, we define
\begin{equation}
    \label{2d div bubble}
    \bm{B}_h^{\mathrm{div}}(K):=P_{r-1}(K)\bm{b}_1^{\rm div}+P_{r-1}(K) \bm{b}_2^{\rm div},\quad \forall K \in \mathcal{T}_h,
\end{equation}
where ${\bm{b}}_1^{\rm div} := {\lambda}_0 {\lambda}_1 \bm{t}_{01}$, 
${\bm{b}}_2^{\rm div} := {\lambda}_0 {\lambda}_2 \bm{t}_{02}$.

The constructions and theoretical results presented in the previous sections, particularly Theorem~\ref{thm: existence of j_h} and \ref{thm:convergence order}, remain valid in this reduced two-dimensional setting.
 
 \subsection{Example 1: 2D convergence tests}
\label{2d}
Let $\Omega = (0, 1)^2$, $\gamma = 1$,  and set
\[
\bm{\beta}(x, y) = \begin{bmatrix}
y - {1 \over 2} \\
-x+{1\over2} 
\end{bmatrix}.
\]
so that condition~\eqref{coer:positive eig} is satisfied. We set $c_f = \text{sgn}(\bm{\beta}\cdot\bm{n})$ to ensure condition~\eqref{cf positive}. For both equations~\eqref{2d curl eqn} and~\eqref{2d div eqn}, the source terms $\bm{f}$ and $\bm{g}$ are chosen such that the exact solution is given by
\[
\bm{u}(x, y) = \begin{bmatrix}
\sin(x) \cdot \cos(y) \\
\exp(x) \cdot y^2
\end{bmatrix}.
\]

\begin{table}[!htbp]
\centering
\caption{Example 1: Convergence results for the 2D $\bm{H}(\mathrm{curl})$ problem with a smooth solution for $r=1,2$.}
\label{tab:curl_2d}
\setlength{\tabcolsep}{4mm}  
\begin{tabular}{c c | c c c c | c c}
\toprule
\multirow{2}{*}{$r$} & \multirow{2}{*}{$1/h$} & 
\multicolumn{4}{c|}{LPS} & 
\multicolumn{2}{c}{No Stabilization} \\
\cmidrule(lr){3-6} \cmidrule(lr){7-8}
& & 
$\| \cdot \|_h$ error & 
order & 
$L^2$ error & 
order & 
$L^2$ error & 
order \\
\midrule
\multirow{6}{*}{1}
& 4   & 7.329e-02 & -- & 1.873e-02 & -- & 2.437e-02 & -- \\
& 8   & 2.573e-02 & 1.51 & 4.866e-03 & 1.94 & 9.910e-03 & 1.30 \\
& 16  & 9.016e-03 & 1.51 & 1.251e-03 & 1.96 & 4.256e-03 & 1.22 \\
& 32  & 3.166e-03 & 1.51 & 3.196e-04 & 1.97 & 1.891e-03 & 1.17 \\
& 64  & 1.115e-03 & 1.51 & 8.116e-05 & 1.98 & 8.596e-04 & 1.14 \\
& 128 & 3.935e-04 & 1.50 & 2.053e-05 & 1.98 & 4.068e-04 & 1.08 \\
\midrule
\multirow{6}{*}{2}
& 4   & 6.616e-03 & -- & 2.014e-03 & -- & 7.962e-04 & -- \\
& 8   & 1.124e-03 & 2.56 & 2.753e-04 & 2.87 & 1.403e-04 & 2.50 \\
& 16  & 1.937e-04 & 2.54 & 3.654e-05 & 2.91 & 2.741e-05 & 2.36 \\
& 32  & 3.360e-05 & 2.53 & 4.880e-06 & 2.90 & 4.738e-06 & 2.53 \\
& 64  & 5.870e-06 & 2.52 & 6.439e-07 & 2.92 & 8.352e-07 & 2.50 \\
& 128 & 1.017e-06 & 2.53 & 8.476e-08 & 2.93 & 1.377e-07 & 2.60 \\
\bottomrule
\end{tabular}
\end{table}

\begin{table}[h!]
\centering
\caption{Example 1: Convergence results for the 2D $\bm{H}(\mathrm{div})$ problem with a smooth solution for $r=1,2$.}
\label{tab:div_2d}
\setlength{\tabcolsep}{4mm}  
\begin{tabular}{c c | c c c c | c c}
\toprule
\multirow{2}{*}{$r$} & \multirow{2}{*}{$1/h$} & 
\multicolumn{4}{c|}{LPS} & 
\multicolumn{2}{c}{No Stabilization} \\
\cmidrule(lr){3-6} \cmidrule(lr){7-8}
& & 
$\| \cdot \|_h$ error & 
order & 
$L^2$ error & 
order & 
$L^2$ error & 
order \\
\midrule
\multirow{6}{*}{1}
& 4   & 5.823e-02 & -- & 1.744e-02 & --& 2.112e-02 & -- \\
& 8   & 1.986e-02 & 1.55 & 4.379e-03 & 1.99 & 8.352e-03 & 1.34 \\
& 16  & 6.905e-03 & 1.52 & 1.111e-03 & 1.98 & 3.275e-03 & 1.35 \\
& 32  & 2.418e-03 & 1.51 & 2.818e-04 & 1.98 & 1.317e-03 & 1.31 \\
& 64  & 8.502e-04 & 1.51 & 7.133e-05 & 1.98 & 5.159e-04 & 1.35 \\
& 128 & 2.998e-04 & 1.50 & 1.808e-05 & 1.98 & 2.084e-04 & 1.31 \\
\midrule
\multirow{6}{*}{2}
& 4   & 6.604e-03 & -- & 1.802e-03 & -- & 7.815e-04 & -- \\
& 8   & 1.135e-03 & 2.54 & 2.458e-04 & 2.87 & 1.426e-04 & 2.45 \\
& 16  & 1.962e-04 & 2.53 & 3.133e-05 & 2.97 & 2.777e-05 & 2.36 \\
& 32  & 3.437e-05 & 2.51 & 3.974e-06 & 2.98 & 5.022e-06 & 2.47 \\
& 64  & 6.056e-06 & 2.50 & 5.029e-07 & 2.98 & 8.948e-07 & 2.49 \\
& 128 & 1.064e-06 & 2.51 & 6.371e-08 & 2.98 & 1.475e-07 & 2.60 \\
\bottomrule
\end{tabular}
\end{table}

For equation~\eqref{2d curl eqn}, we take the bubble space $\bm{B}_h^{\mathrm{curl}}(K)$ as defined in~\eqref{2d curl bubble}, with polynomial orders $r = 1, 2$. The underlying finite element space $\bm{W}_h$ is chosen as the Nédélec element of the second kind of order~$r$ (in 2D, this corresponds to a $\frac{\pi}{2}$-rotation of the BDM element), so that the approximation property~\eqref{W_h:i_h_appro} is satisfied. For $r = 1$, the bubble enrichment adds 2 local degrees of freedom per element. For $r = 2$, the enrichment contributes 4 local degrees of freedom per element, since $\bm{W}_h$ already contains certain $\bm{H}(\mathrm{curl})$-conforming bubble functions. Similarly, for equation~\eqref{2d div eqn}, we define the bubble space $\bm{B}_h^{\mathrm{div}}(K)$ as in~\eqref{2d div bubble}, again for $r = 1, 2$. The space $\bm{W}_h$ is taken to be the BDM element of order~$r$.

Tables~\ref{tab:curl_2d} and~\ref{tab:div_2d} summarize the convergence behavior in both the energy norm $\| \cdot \|_h$ and the $L^2$ norm with respect to the mesh size $h$. For the LPS method, we observe $(r + \tfrac{1}{2})$-th order convergence in the energy norm~\eqref{ene norm}, in agreement with the theoretical prediction in Theorem~\ref{thm:convergence order}. Additionally, the results indicate an observed $(r + 1)$-th order convergence in the $L^2$ norm.

For comparison, the performance of the scheme without stabilization is also included. The contrast between the stabilized and unstabilized results clearly demonstrates the effectiveness of the proposed stabilization strategy in achieving the expected convergence rates and improving overall accuracy.

\subsection{Example 2: 3D convergence tests}
\label{3d experiment}
We now turn to the three-dimensional examples to further validate the proposed method and confirm its effectiveness with the LPS stabilization and bubble enrichment.

Let $\Omega = (0, 1)^3$ and set $\gamma = 8$. We define the advection field as
\begin{equation*}
\bm{\beta}(x, y, z) = \begin{bmatrix}
\exp(x^2) \\
y \sin(z) \\
x y z
\end{bmatrix},
\end{equation*}
which satisfies condition~\eqref{coer:positive eig}. As before, we set $c_f = \operatorname{sgn}(\bm{\beta} \cdot \bm{n})$ to ensure that condition~\eqref{cf positive} holds.

The source terms $\bm{f}$ and $\bm{g}$ are chosen so that the following smooth vector field serves as the exact solution to~\eqref{eq:3d unified}
\begin{equation*}
\bm{u}(x, y, z) = \begin{bmatrix}
y \exp(x z) \\
- x y \\
\sin(x y z)
\end{bmatrix}.
\end{equation*}

For both $\mathrm{d} = \mathrm{curl}$ and $\mathrm{d} = \mathrm{div}$, the bubble enrichment contributes 3 local degrees of freedom per element when $r = 1$. Tables~\ref{tab:3d curl} and~\ref{tab:3d div} report the convergence rates in the energy norm $\| \cdot \|_h$ and the $L^2$ norm for $r = 1$, with respect to the mesh size $h$. For the LPS method, we observe a convergence rate of $\frac32$ in the energy norm~\eqref{ene norm}, in agreement with the theoretical prediction in Theorem~\ref{thm:convergence order}. Additionally, due to the smoothness of the exact solution, second-order convergence is observed in the $L^2$ norm. 

For comparison, results from the scheme without stabilization are also included. The contrast clearly illustrates the stabilizing effect of the proposed method, confirming its effectiveness in improving accuracy and ensuring the predicted convergence behavior.

\begin{table}[!htbp]
\centering
\caption{Example 2: Convergence results for the 3D $\bm{H}(\mathrm{curl})$ problem with a smooth solution for $r=1$.}
\label{tab:3d curl}
\setlength{\tabcolsep}{6mm}  
\begin{tabular}{c | cc cc | cc cc | cc}
\toprule
\multirow{2}{*}{$1/h$} & 
\multicolumn{4}{c|}{LPS } & 
\multicolumn{2}{c}{No Stabilization} \\
\cmidrule(lr){2-5}  \cmidrule(lr){6-7}
& 
\multicolumn{1}{c}{$\| \cdot \|_h$ error} & 
\multicolumn{1}{c}{order} & 
\multicolumn{1}{c}{$L^2$ error} & 
\multicolumn{1}{c}{order} & 
\multicolumn{1}{c}{$L^2$ error} & 
\multicolumn{1}{c}{order} \\
\midrule
1  & 1.869e+0 & --    & 1.877e-1 & --    & 1.086e-1 & --    \\
2  & 7.141e-1 & 1.39  & 7.376e-2 & 1.35  & 4.309e-2 & 1.33  \\
4  & 2.408e-1 & 1.57  & 2.039e-2 & 1.86  & 1.512e-2 & 1.51  \\
8  & 8.280e-2 & 1.54  & 4.593e-3 & 2.15  & 5.356e-3 & 1.50  \\
16 & 2.909e-2 & 1.51  & 1.069e-3 & 2.10  & 1.977e-3 & 1.44  \\
32 & 1.029e-2 & 1.50  & 2.601e-4 & 2.04  & 7.503e-4 & 1.40  \\
\bottomrule
\end{tabular}
\end{table}

\begin{table}[!htbp]
\centering
\caption{Example 2: Convergence results for the 3D $\bm{H}(\mathrm{div})$ problem with a smooth solution for $r=1$.}
\label{tab:3d div}
\setlength{\tabcolsep}{6mm}  
\begin{tabular}{c | cc cc | cc}
\toprule
\multirow{2}{*}{$1/h$} & 
\multicolumn{4}{c|}{LPS} &  
\multicolumn{2}{c}{No Stabilization} \\
\cmidrule(lr){2-5} \cmidrule(lr){6-7}
& 
\multicolumn{1}{c}{$\| \cdot \|_h$ error} & 
\multicolumn{1}{c}{order} & 
\multicolumn{1}{c}{$L^2$ error} & 
\multicolumn{1}{c}{order} & 
\multicolumn{1}{c}{$L^2$ error} & 
\multicolumn{1}{c}{order} \\
\midrule
1  & 2.108e+0 & --  & 2.301e-1 & --  & 1.038e-1 & --  \\
2  & 8.849e-1 & 1.25  & 9.536e-2 & 1.27  & 3.418e-2 & 1.60  \\
4  & 2.989e-1 & 1.57  & 2.637e-2 & 1.85  & 1.136e-2 & 1.59  \\
8  & 1.021e-1 & 1.55  & 6.109e-3 & 2.11  & 3.658e-3 & 1.64  \\
16 & 3.567e-2 & 1.52  & 1.427e-3 & 2.10  & 1.215e-3 & 1.59  \\
32 & 1.262e-2 & 1.50  & 3.488e-4 & 2.03  & 4.206e-4 & 1.53  \\
\bottomrule
\end{tabular}
\end{table}



\subsection{Example 3: Clarification of stabilization terms}
To further clarify the role of the stabilization terms, we present additional convergence results that demonstrate the necessity of including both $S_h^1(\cdot,\cdot)$ and $S_h^2(\cdot,\cdot)$ in order to achieve optimal convergence rates. We consider the case $\mathrm{d} = \mathrm{curl}$ and adopt the same setup as in Example~\ref{3d experiment}. The $S_h^1$-norm is defined  by
\begin{equation}
    \|\bm{u}\|_{S_h^1}^2 := \|\bm{u}\|^2_{L^2(\Omega)}  + \sum_{f \in \mathcal{F}^{\partial}_+} \|\bm{u}\|^2_{f, \frac{1}{2} \bm{\bm{\beta}}} + \sum_{f \in \mathcal{F}^{\partial}_{-}} \|\bm{u}\|^2_{f, -\frac{1}{2} \bm{\bm{\beta}}} + \sum_{f \in \mathcal{F}^{\circ}} \|[\bm{u}]_f\|^2_{f, c_f \bm{\bm{\beta}}}.
\end{equation} 
Table~\ref{tab:3d curl S1+S2 is necessary} compares three variants of the method: full stabilization using both $S_h^1(\cdot,\cdot)$ and $S_h^2(\cdot,\cdot)$, partial stabilization using only $S_h^1(\cdot,\cdot)$, and partial stabilization using only $S_h^2(\cdot,\cdot)$. As shown in the table, omitting the $S_h^2(\cdot,\cdot)$ term results in a reduction in convergence order, while neglecting the $S_h^1(\cdot,\cdot)$ term leads to a non-convergent scheme. These results confirm that both stabilization terms are essential for achieving the optimal convergence predicted by the theory.

\begin{table}[!htbp]
\centering
\caption{Example 3: Errors and convergence rates for the 3D $\bm{H}(\mathrm{curl})$ problem with $r = 1$ and various combinations of stabilization terms.}

\label{tab:3d curl  S1+S2 is necessary}
\begin{tabular}{c | cc cc | cc cc | cc}
\toprule
\multirow{2}{*}{$1/h$} & 
\multicolumn{4}{c|}{$S_h^1+S_h^2$ } & 
\multicolumn{4}{c|}{$S_h^1$ only} & 
\multicolumn{2}{c}{$S_h^2$ only} \\
\cmidrule(lr){2-5} \cmidrule(lr){6-9} \cmidrule(lr){10-11}
& 
\multicolumn{1}{c}{$\| \cdot \|_h$ error} & 
\multicolumn{1}{c}{order} & 
\multicolumn{1}{c}{$L^2$ error} & 
\multicolumn{1}{c}{order} & 
\multicolumn{1}{c}{$\| \cdot \|_{S_h^1}$ error} & 
\multicolumn{1}{c}{order} & 
\multicolumn{1}{c}{$L^2$ error} & 
\multicolumn{1}{c}{order} & 
\multicolumn{1}{c}{$L^2$ error} & 
\multicolumn{1}{c}{order} \\
\midrule
 1 & 1.869e+0 & -- & 1.877e-1 & -- & 2.282e-1 & -- & 1.078e-1 & -- & 2.465e-1 & -- \\
2 & 7.141e-1 & 1.39 & 7.376e-2 & 1.35 & 8.891e-2 & 1.36 & 4.075e-2 & 1.40 & 2.231e-1 & 0.14 \\
4 & 2.408e-1 & 1.57 & 2.039e-2 & 1.86 & 3.305e-2 & 1.43 & 1.282e-2 & 1.67 & 2.658e-1 & -0.25 \\
8 & 8.280e-2 & 1.54 & 4.593e-3 & 2.15 & 1.245e-2 & 1.41 & 3.945e-3 & 1.70 & 3.050e-1 & -0.20 \\
16 & 2.909e-2 & 1.51 & 1.069e-3 & 2.10 & 4.775e-3 & 1.38 & 1.241e-3 & 1.67 & 3.308e-1 & -0.12 \\
32 & 1.029e-2 & 1.50 & 2.601e-4 & 2.04 & 1.838e-3 & 1.38 & 3.887e-4 & 1.68 & 3.467e-1 & -0.07 \\
\bottomrule
\end{tabular}
\end{table}

\subsection{Example 4: Clarification of bubble enrichment}
We now present an example to demonstrate that bubble enrichment is essential for achieving the optimal convergence order. In this example, we consider the case $\mathrm{d} = \mathrm{curl}$. We take $\bm{W}_h$ to be the Nédélec element of the second kind of order $2$ and set $\bm{V}_h = \bm{W}_h$, using the same problem data as in Example~\ref{2d}. 

In this example without bubble enrichment, the stabilization term $S_h^2(\cdot,\cdot)$ has no effect. Therefore, we only compare the cases with and without the inclusion of $S^1_h(\cdot,\cdot)$.
As shown in Table~\ref{tab:3d curl enrichment is necessary}, a significant reduction in convergence order is observed when the $\bm{H}(\mathrm{curl})$ bubble enrichment is omitted. This contrasts with the results in Table~\ref{tab:curl_2d}, where the optimal convergence orders of 2.5 in the energy norm and 3 in the $L^2$ norm are achieved, and confirms that the enrichment is indispensable for obtaining optimal rates.

\begin{table}[!htbp]
\centering
\caption{Example 4: Errors and convergence rates for the 2D $\bm{H}(\mathrm{curl})$ problem using $r = 2$ without bubble enrichment.}
\label{tab:3d curl  enrichment is necessary}
\begin{tabular}{c| cc cc | cc}
\toprule
\multirow{2}{*}{$1/h$} & 
\multicolumn{4}{c|}{$S_h^1$ only} & 
\multicolumn{2}{c}{No Stabilization} \\
\cmidrule(lr){2-5} \cmidrule(lr){6-7} 
&  
\multicolumn{1}{c}{$\| \cdot \|_{S_h^1}$ error} & 
\multicolumn{1}{c}{order} & 
\multicolumn{1}{c}{$L^2$ error} & 
\multicolumn{1}{c}{order} & 
\multicolumn{1}{c}{$L^2$ error} & 
\multicolumn{1}{c}{order} \\
\midrule
    4 & 2.725e-03 & -- & 2.267e-03 & -- & 1.132e-03 & -- \\
8 & 4.443e-04 & 2.62 & 2.906e-04 & 2.96 & 2.056e-04 & 2.46 \\
16 & 8.261e-05 & 2.43 & 3.934e-05 & 2.88 & 4.507e-05 & 2.19 \\
32 & 1.702e-05 & 2.28 & 6.863e-06 & 2.52 & 1.079e-05 & 2.06 \\
64 & 3.736e-06 & 2.19 & 1.546e-06 & 2.15 & 2.355e-06 & 2.20 \\
128 & 8.500e-07 & 2.14 & 4.004e-07 & 1.95 & 5.777e-07 & 2.03 \\
\bottomrule
\end{tabular}
\end{table}

\subsection{Example 5: 2D circular layer test}
We now present an example featuring a circular interior layer. We use the same spaces $\bm{B}_h^{\mathrm{curl}}$ and $\bm{W}_h$ as in section~\ref{2d}, with polynomial degree $r = 1$. The parameters $\gamma$ and $\bm{\beta}$ are also taken as in section~\ref{2d}. The source terms $\bm{f}$ and $\bm{g}$ are chosen so that the exact solution is
\[
\bm{u}(x, y) = \begin{bmatrix}
16x(1 - x)y(1 - y) \left( \frac{1}{2} + \frac{1}{\pi} \arctan\left( \frac{200}{0.25^2 - (x - 0.5)^2 - (y - 0.5)^2} \right) \right) \\
16x(1 - x)y(1 - y) \left( \frac{1}{2} + \frac{1}{\pi} \arctan\left( \frac{200}{0.25^2 - (x - 0.5)^2 - (y - 0.5)^2} \right) \right)
\end{bmatrix}.
\]

This solution exhibits a circular interior layer along the circumference of the circle centered at $(0.5, 0.5)$ with radius $0.25$, fully contained within the unit square domain $\Omega = (0,1)^2$.

Figure~\ref{trisurf_Circular} displays the first component of the LPS solution on a mesh with $1/h = 32$. The numerical solution accurately captures the interior layer and remains free of spurious oscillations.

\begin{figure}[!htbp]
    \centering
    \includegraphics[width=0.5\textwidth]{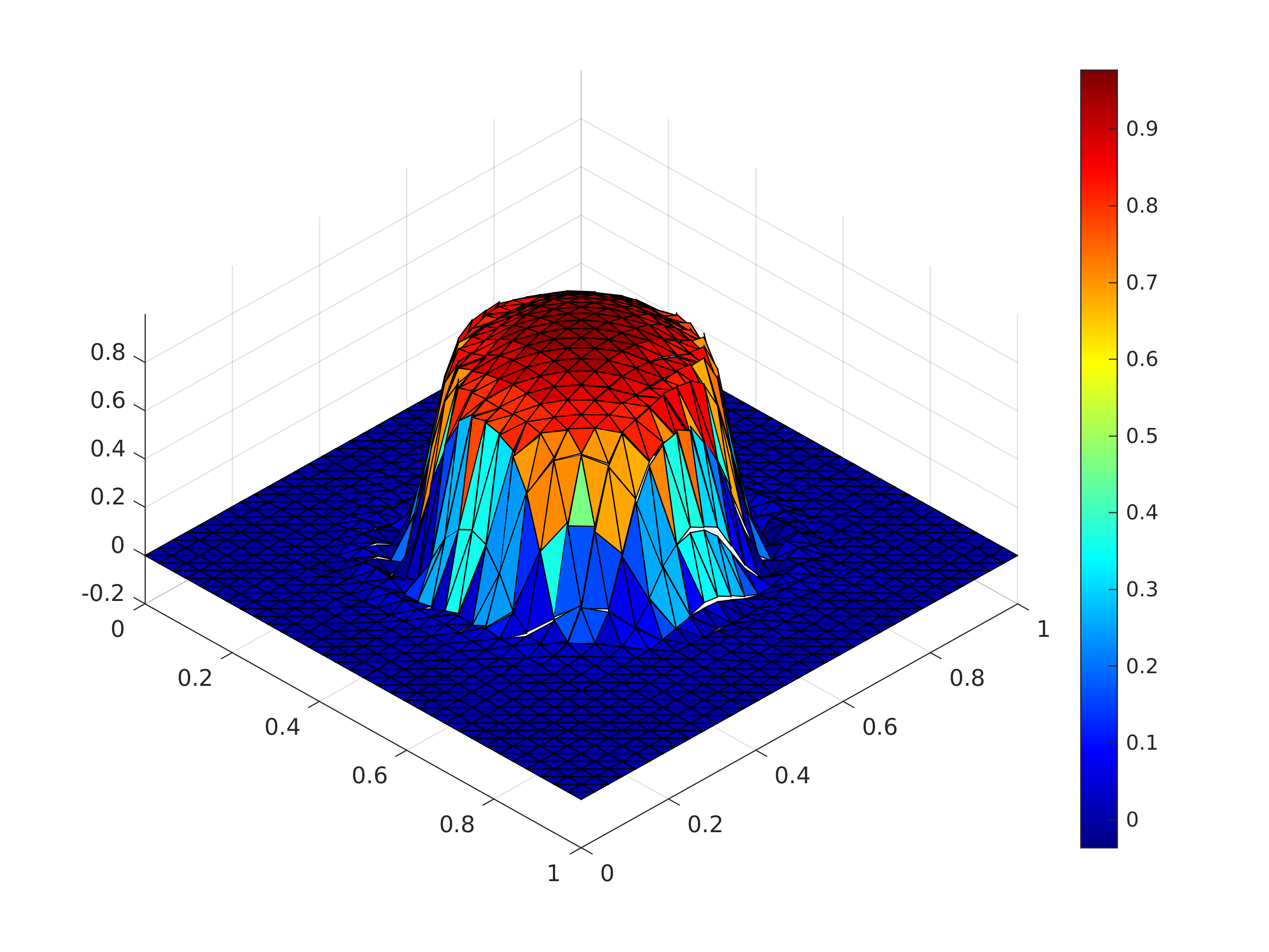}
    \caption{\centering Example 5: First component of the LPS solution featuring a circular interior layer.}
    \label{trisurf_Circular}
\end{figure}

\subsection{Example 6: 2D interior layer test}
Let $\Omega = (0, 1)^2$, $r = 1$, $\gamma = 0$, and $\bm{f} = \bm{0}$. We define the advection field and boundary data as
\[
\bm{\beta}(x, y) = \begin{bmatrix}
- y - 1 \\
x + 1
\end{bmatrix}, \qquad
\bm{g}(x, y) = \begin{cases}
(0, 0)^\top & \text{if } x > 0.6, \\
(1, 1)^\top & \text{otherwise}.
\end{cases}
\]

We consider equation~\eqref{2d curl eqn} and use the same finite element spaces as in section~\ref{2d}. Figure~\ref{fig:trisurf} shows the first component of the LPS solution on a mesh with $1/h = 32$. The interior layer is clearly resolved without visible numerical oscillations, further demonstrating the robustness and efficiency of the proposed method.

\begin{figure}[!htbp]
    \centering
    \includegraphics[width=0.5\textwidth]{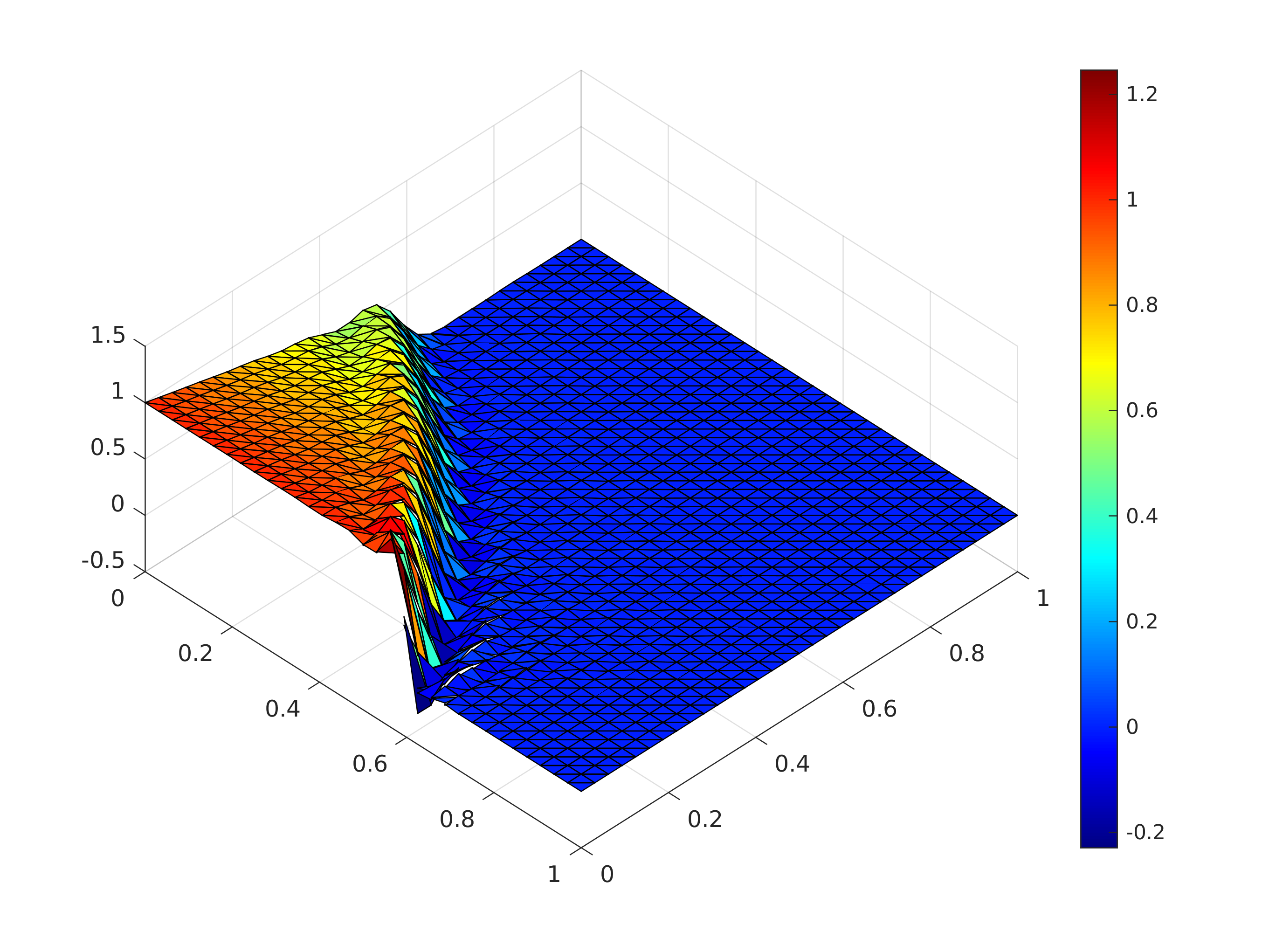}
    \caption{\centering Example 6: First component of the LPS solution featuring an interior layer.}
    \label{fig:trisurf}
\end{figure}

\section{Conclusion}\label{sec:conclusion}
In this paper, we have developed local projection stabilization (LPS) methods for $\bm{H}(\mathrm{curl})$ and $\bm{H}(\mathrm{div})$ pure advection equations. The proposed approach is applicable to $\bm{H}(\mathrm{d})$-conforming finite element spaces of arbitrary order, where $\mathrm{d} = \mathrm{curl}$ or $\mathrm{div}$. The resulting schemes possess stability and conformity properties that make them well-suited for potential applications to convection-dominated discretizations arising in magnetohydrodynamics (MHD).

A key component of the method is the establishment of a local inf-sup condition, achieved through enrichment of the approximation space with $\bm{H}(\mathrm{d})$ bubble functions. This enrichment facilitates the construction of a modified interpolation operator, which plays a central role in ensuring the stability and accuracy of the scheme. As demonstrated by the numerical experiments, both the bubble enrichment and the LPS stabilization terms are essential to achieve optimal convergence rates.
\section*{Acknowledgement} 
The work of Shuonan Wu is supported in part by the National Natural Science Foundation of China grant No.~12222101 and the Beijing Natural Science Foundation No.~1232007.

\bibliographystyle{plain}
\bibliography{lps}

\begin{thebibliography}{10}

\bibitem{ayuso2009discontinuous}
Blanca Ayuso and L~Donatella Marini.
\newblock Discontinuous {G}alerkin methods for advection-diffusion-reaction
  problems.
\newblock {\em SIAM Journal on Numerical Analysis}, 47(2):1391--1420, 2009.

\bibitem{becker2001finite}
Roland Becker and Malte Braack.
\newblock A finite element pressure gradient stabilization for the stokes
  equations based on local projections.
\newblock {\em Calcolo}, 38(4):173--199, 2001.

\bibitem{becker2004two}
Roland Becker and Malte Braack.
\newblock A two-level stabilization scheme for the {N}avier-{S}tokes equations.
\newblock In {\em Numerical Mathematics and Advanced Applications: Proceedings
  of ENUMATH 2003 the 5th European Conference on Numerical Mathematics and
  Advanced Applications Prague, August 2003}, pages 123--130. Springer, 2004.

\bibitem{boffi2013mixed}
Daniele Boffi, Franco Brezzi, and Michel Fortin.
\newblock {\em Mixed finite element methods and applications}, volume~44.
\newblock Springer-Verlag, Berlin, 2013.

\bibitem{braack2009optimal}
Malte Braack.
\newblock Optimal control in fluid mechanics by finite elements with symmetric
  stabilization.
\newblock {\em SIAM Journal on Control and Optimization}, 48(2):672--687, 2009.

\bibitem{braack2006local}
Malte Braack and Erik Burman.
\newblock Local projection stabilization for the {O}seen problem and its
  interpretation as a variational multiscale method.
\newblock {\em SIAM Journal on Numerical Analysis}, 43(6):2544--2566, 2006.

\bibitem{brenner2008mathematical}
Susanne~C Brenner.
\newblock {\em The mathematical theory of finite element methods}.
\newblock Springer, New York City, 2008.

\bibitem{brezzi1998further}
Franco Brezzi, Leopoldo~P Franca, and Alessandro Russo.
\newblock Further considerations on residual-free bubbles for
  advective-diffusive equations.
\newblock {\em Computer Methods in Applied Mechanics and Engineering},
  166(1-2):25--33, 1998.

\bibitem{brezzi1999priori}
Franco Brezzi, Thomas~JR Hughes, LD~Marini, Alessandro Russo, and Endre
  S{\"u}li.
\newblock A priori error analysis of residual-free bubbles for
  advection-diffusion problems.
\newblock {\em SIAM Journal on Numerical Analysis}, 36(6):1933--1948, 1999.

\bibitem{brezzi1998applications}
Franco Brezzi, D~Marini, and Alessandro Russo.
\newblock Applications of the pseudo residual-free bubbles to the stabilization
  of convection-diffusion problems.
\newblock {\em Computer Methods in Applied Mechanics and Engineering},
  166(1-2):51--63, 1998.

\bibitem{brezzi2004discontinuous}
Franco Brezzi, L~Donatella Marini, and Endre S{\"u}li.
\newblock Discontinuous {G}alerkin methods for first-order hyperbolic problems.
\newblock {\em Mathematical Models and Methods in Applied Sciences},
  14(12):1893--1903, 2004.

\bibitem{brezzi1994choosing}
Franco Brezzi and Alessandro Russo.
\newblock Choosing bubbles for advection-diffusion problems.
\newblock {\em Mathematical Models and Methods in Applied Sciences},
  4(04):571--587, 1994.

\bibitem{burman2005unified}
Erik Burman.
\newblock A unified analysis for conforming and nonconforming stabilized finite
  element methods using interior penalty.
\newblock {\em SIAM Journal on Numerical Analysis}, 43(5):2012--2033, 2005.

\bibitem{burman2010consistent}
Erik Burman.
\newblock Consistent {SUPG}-method for transient transport problems: stability
  and convergence.
\newblock {\em Computer Methods in Applied Mechanics and Engineering},
  199(17-20):1114--1123, 2010.

\bibitem{burman2007continuous}
Erik Burman and Alexandre Ern.
\newblock Continuous interior penalty $hp$-finite element methods for advection
  and advection-diffusion equations.
\newblock {\em Mathematics of Computation}, 76(259):1119--1140, 2007.

\bibitem{burman2009weighted}
Erik Burman, Johnny Guzm{\'a}n, and Dmitriy Leykekhman.
\newblock Weighted error estimates of the continuous interior penalty method
  for singularly perturbed problems.
\newblock {\em IMA Journal of Numerical Analysis}, 29(2):284--314, 2009.

\bibitem{chen2005optimal}
Long Chen and Jinchao Xu.
\newblock An optimal streamline diffusion finite element method for a
  singularly perturbed problem.
\newblock {\em Contemporary Mathematics}, 383:191--202, 2005.

\bibitem{franca1992stabilized}
Leopoldo~P Franca, Sergio~L Frey, and Thomas~JR Hughes.
\newblock Stabilized finite element methods: I. application to the
  advective-diffusive model.
\newblock {\em Computer Methods in Applied Mechanics and Engineering},
  95(2):253--276, 1992.

\bibitem{franca2002stability}
Leopoldo~P Franca and Lutz Tobiska.
\newblock Stability of the residual free bubble method for bilinear finite
  elements on rectangular grids.
\newblock {\em IMA Journal of Numerical Analysis}, 22(1):73--87, 2002.

\bibitem{friedrichs1958symmetric}
Kurt~O Friedrichs.
\newblock Symmetric positive linear differential equations.
\newblock {\em Communications on Pure and Applied Mathematics}, 11(3):333--418,
  1958.

\bibitem{ganesan2010stabilization}
Sashikumaar Ganesan and Lutz Tobiska.
\newblock Stabilization by local projection for convection--diffusion and
  incompressible flow problems.
\newblock {\em Journal of Scientific Computing}, 43:326--342, 2010.

\bibitem{garg2023local}
Deepika Garg and Sashikumaar Ganesan.
\newblock Local projection stabilized finite element methods for
  advection--reaction problems.
\newblock {\em Calcolo}, 60(4):45, 2023.

\bibitem{gerbeau2006mathematical}
Jean-Fr{\'e}d{\'e}ric Gerbeau, Claude Le~Bris, and Tony Leli{\`e}vre.
\newblock {\em Mathematical Methods for the Magnetohydrodynamics of Liquid
  Metals}.
\newblock Clarendon Press, Oxford, England, 2006.

\bibitem{heumann2010eulerian}
Holger Heumann and Ralf Hiptmair.
\newblock Eulerian and semi-{L}agrangian methods for convection-diffusion for
  differential forms.
\newblock {\em arXiv preprint arXiv:1001.1031}, 2010.

\bibitem{heumann2013stabilized}
Holger Heumann and Ralf Hiptmair.
\newblock Stabilized {G}alerkin methods for magnetic advection.
\newblock {\em ESAIM: Mathematical Modelling and Numerical Analysis},
  47(6):1713--1732, 2013.

\bibitem{heumann2016stabilized}
Holger Heumann, Ralf Hiptmair, and Cecilia Pagliantini.
\newblock Stabilized {G}alerkin for transient advection of differential forms.
\newblock {\em Discrete and Continuous Dynamical Systems-Series S},
  9(1):185--214, 2016.

\bibitem{houston2002discontinuous}
Paul Houston, Christoph Schwab, and Endre S{\"u}li.
\newblock Discontinuous $hp$-finite element methods for
  advection-diffusion-reaction problems.
\newblock {\em SIAM Journal on Numerical Analysis}, 39(6):2133--2163, 2002.

\bibitem{hughes1989new}
Thomas~JR Hughes, Leopoldo~P Franca, and Gregory~M Hulbert.
\newblock A new finite element formulation for computational fluid dynamics:
  Viii. the {G}alerkin/least-squares method for advective-diffusive equations.
\newblock {\em Computer methods in Applied Mechanics and Engineering},
  73(2):173--189, 1989.

\bibitem{hughes1979finite}
Thomas~JR Hughes, Wing~Kam Liu, and Alec Brooks.
\newblock Finite element analysis of incompressible viscous flows by the
  penalty function formulation.
\newblock {\em Journal of Computational Physics}, 30(1):1--60, 1979.

\bibitem{knobloch2010generalization}
Petr Knobloch.
\newblock A generalization of the local projection stabilization for
  convection-diffusion-reaction equations.
\newblock {\em SIAM Journal on Numerical Analysis}, 48(2):659--680, 2010.

\bibitem{knobloch2009local}
Petr Knobloch and Gert Lube.
\newblock Local projection stabilization for advection--diffusion--reaction
  problems: One-level vs. two-level approach.
\newblock {\em Applied Numerical Mathematics}, 59(12):2891--2907, 2009.

\bibitem{matthies2007unified}
G.~Matthies, P.~Skrzypacz, and L.~Tobiska.
\newblock A unified convergence analysis for local projection stabilisations
  applied to the {O}seen problem.
\newblock {\em {ESAIM}: Mathematical Modelling and Numerical Analysis},
  41(4):713--742, 2007.

\bibitem{mizukami1985petrov}
Akira Mizukami and Thomas~JR Hughes.
\newblock A {P}etrov-{G}alerkin finite element method for convection-dominated
  flows: an accurate upwinding technique for satisfying the maximum principle.
\newblock {\em Computer Methods in Applied Mechanics and Engineering},
  50(2):181--193, 1985.

\bibitem{wang2023exponentially}
Jindong Wang and Shuonan Wu.
\newblock Exponentially-fitted finite elements for {$H$(curl) and $H$(div)
  }convection-diffusion problems.
\newblock {\em arXiv preprint arXiv:2308.07680}, 2023.

\bibitem{wang2024discontinuous}
Jindong Wang and Shuonan Wu.
\newblock Discontinuous {G}alerkin methods for magnetic advection-diffusion
  problems.
\newblock {\em Computers and Mathematics with Applications}, 174:43--54, 2024.

\bibitem{wang2024hybridizable}
Jindong Wang and Shuonan Wu.
\newblock A hybridizable discontinuous galerkin method for magnetic
  advection--diffusion problems.
\newblock {\em Journal of Scientific Computing}, 99(3):86, 2024.

\bibitem{wu2020simplex}
Shuonan Wu and Jinchao Xu.
\newblock Simplex-averaged finite element methods for {$H$(grad), $H$(curl),
  and $H$(div)} convection-diffusion problems.
\newblock {\em SIAM Journal on Numerical Analysis}, 58(1):884--906, 2020.

\end{thebibliography}

\end{document}